\newtheorem{theorem}{Theorem}
\newtheorem{example}{Example} {\normalfont}{\normalfont}
\newtheorem{problem}{Problem}
\newtheorem{corollary}{Corollary}
\newtheorem{proposition}{Proposition}{\normalfont}{\normalfont}
{\normalfont}{\normalfont}
\newtheorem{remark}{Remark}{\normalfont}{\normalfont}
\newtheorem{definition}{Definition}
\newenvironment*{proof}{\emph{Proof:}}{}
\newcommand*{\qued}{\hfill\ensuremath{\blacksquare}}
\definecolor{blue}{rgb}{0,0,1}
\definecolor{grayDP}{rgb}{0.4,0.4,0.4}
\newcommand{\DP}[1]{{#1}}
\newcommand{\xhyp}{\mathbf{\rho}}
\newcommand{\xaug}{\tilde{\mathbf{x}}}
\newcommand{\relorder}{\DP{\mathbf{d}}}
\begin{document}

\begin{frontmatter}
\runtitle{A probabilistic interpretation of set-membership filtering: application to polynomial systems through polytopic bounding}

\title{A probabilistic interpretation of set-membership filtering: application to polynomial systems through polytopic bounding\thanksref{footnoteinfo}}

\thanks[footnoteinfo]{This paper was not presented at any IFAC
meeting. Corresponding author Alessio Benavoli. Tel.  0041 58 666 6509}

\author[IDSIA]{Alessio Benavoli}\ead{alessio@idsia.ch}, 
\author[IMT]{Dario Piga}\ead{dario.piga@imtlucca.it}, 

\address[IDSIA]{IDSIA Dalle Molle Institute for Artificial Intelligence SUPSI-USI, Manno, Switzerland.}
\address[IMT]{IMT Institute for Advanced Studies Lucca, Piazza San Francesco 19, 55100 Lucca, Italy.}

\begin{keyword}                           
State estimation; Filtering; Set-membership estimation; set of probability measures; Sum-of-squares polynomials.
\end{keyword}                             

\begin{abstract}
{Set-membership estimation is usually formulated in the context of set-valued calculus
and no probabilistic calculations are necessary.}
In this paper, we show that set-membership estimation can  be equivalently formulated in the probabilistic setting by employing sets of probability measures.
\DP{Inference} in set-membership estimation  \DP{is} thus carried out by computing expectations \DP{with respect to}  the updated
set of probability measures $\mathcal{P}$ as in the probabilistic case.
In particular, \DP{it is shown that inference can be performed} by solving a particular semi-infinite linear programming problem,
which is a special case of the truncated moment problem in which only the zero-th order moment is known (i.e., the support).
By writing the dual of the above semi-infinite linear programming problem, \DP{it is  shown} that, if the nonlinearities in the measurement and process equations are \DP{polynomial} and if the bounding sets for initial state, process and measurement noises are described by polynomial inequalities, then an approximation of this semi-infinite linear programming problem can efficiently be obtained by using the theory of sum-of-squares polynomial optimization. We then derive a smart greedy procedure to compute a polytopic outer-approximation of the true membership-set, by computing the  minimum-volume polytope
that outer-bounds the set that includes all the means computed \DP{with respect to}  $\mathcal{P}$.

\end{abstract}

\end{frontmatter}

\section{Introduction} \vspace{-3mm}
Inferring the value of the state of a dynamical system at the various time instants is a classical problem in control and estimation theory. The state is estimated based on noisy signal observations and on a state transition model, which in turn is affected by two sources of uncertainty (namely, process \DP{disturbance} and uncertainty on the initial state conditions).
{
In the literature, there are two main approaches for dealing with the uncertainties and noises acting on the system:
 \begin{itemize}
   \item the \textit{stochastic (probabilistic) approach} that assumes that the noises and the  uncertainties are unknown
but they can be described by known probability distributions.
   \item  the \textit{set-membership approach} that assumes that the noises and the  uncertainties are unknown
but bounded \DP{in} some compact sets.
\end{itemize}
The probabilistic approach is grounded on  Bayesian filtering, whose aim is
to update with the measurements and propagate up on time the \emph{probability density function} (PDF) of the state.
Inferences are then carried out by computing expectations \DP{with respect to}  this PDF, i.e., mean, variance, credible regions.
It is well known that, for linear discrete-time dynamical systems corrupted by Gaussian noises, \DP{the} Bayesian filter reduces to \DP{the}  Kalman filter.\\
~\\
The set-membership approach is instead based on the construction of a compact set which is guaranteed to include  the
state values of the system that are consistent with the measured output and the assumed  bounds on the noises/disturbances \cite{milvic,Combettes1993,minoplwa,Milanese11,Piga2011set,casinifeas2014}}.
This compact set is propagated \DP{in} time and updated recursively with the output observations.
In set-membership estimation, computing inferences thus means to determine this compact set. Set-membership estimation was first proposed in  \cite{Schweppe67,Bertsekas71}, where an ellipsoidal bounding of the state of linear dynamical systems
is computed. The application of ellipsoidal sets to the state estimation
problem has also been studied by other authors, for example \cite{kuntsevich1992guaranteed,savkin1998robust},
{and, independently, in the communications and signal
processing community, starting from the works  \cite{Fogel1982229,deller1989implementing,DELLER1989301,Deller1994}.}
In order to improve the estimation accuracy,
the use of a convex polytope instead of an ellipsoid has been proposed in \cite{Lahanier89,mo1990fast}.
Unfortunately such a polytope may be extremely complex and the corresponding polytopic updating algorithms
 may require an excessive amount of calculations and storage (without any approximations, the number of vertices of the polytope
increases exponentially in time). For this reason, it has been suggested  to outer approximate the true polytope with a simpler polytope,
i.e. possessing a limited number of vertices or, equivalently, faces \cite{broman1990compact}. In this respect, a parallelotopic approximation of the set-membership set was presented in \cite{chisci1996recursive,chisci1998block}.
A parallelotope is  \DP{the} generalisation of \DP{the}  parallelogram to $\mathbb{R}^n$.
Minimum-volume bounding parallelotopes are then used to
estimate the state of a discrete-linear dynamical system
with polynomial complexity.
Zonotopes have been proposed to reduce the conservativeness of parallelotopes.
Intuitively zonotopes are polytopes with parallel faces, for a more precise
definition see \cite[Ch. 2]{le2013zonotopes}. A  parallelotope is thus a special  zonotope.
Zonotopes are used in  \cite{puig2003worst,combastel2003state,le2011new}
to build a state bounding observer in the context of linear discrete systems.

Zonotopes are also employed to address the problem of  set-membership estimation
for non-linear discrete-time systems with a bounded description of noise and uncertainties \cite{Alamo03}.
At each sample time, a guaranteed bound of the uncertain state trajectory of the system
is calculated using interval arithmetic applied to the nonlinear functions
through the mean interval extension theorem.
This outer bound is represented by a zonotope.
Similar approaches for set-membership  estimation for nonlinear systems are presented in  \cite{calafiore2005reliable,el2001robust,Maier2009}, where
 ellipsoids  are used instead of zonotopes. \DP{Recently, randomized methods are used in \cite{dabbene2015randomized} to approximate, with probabilistic guarantees, the uncertain state trajectory with polynomial sublevel sets.}


The aim of this paper is to address the problem of the estimation of
the state of a discrete-time non-linear dynamical system (characterized by polynomial non-linearities)
in which initial state and noises are unknown but bounded by some compact sets (defined by polynomial inequalities).
We are therefore in the context of set-membership estimation,
but we will address this problem in a very different way
from the approaches presented above.
We reformulate set-membership in the probabilistic setting and solve it using the theory of moments
and positive polynomials.
More precisely the contributions  are the following.

First, by exploiting recent results on filtering with sets of probability measures  
\cite{benavoli2013b,benavoli_2010_journal_b}, we show that set-membership estimation can  be equivalently formulated in 
a probabilistic setting by employing sets of probability measures. In particular, we show that the prediction and 
updating steps of set-membership estimation can be obtained by applying Chapman-Kolmogorov equation and Bayes' rule 
point-wise to the elements of this set of probability measures  $\mathcal{P}$ . This unifies the probabilistic approach 
(Bayes filter) and the set-membership approach to state estimation. {This result can have an enormous impact, because it 
finally can allow us to combine set-membership and classical probabilistic uncertainty in order to obtain hybrid 
filters, i.e., stochastic (probabilistic) filters that are for instance able to use information about the bounding 
region as well as the probabilistic moments (mean and variance) of the noises or that are able to deal with a Gaussian 
measurement noise and a bounded, with known moments, process noise etc..  {Moreover, it can allow us to compute credible
regions (Bayesian confidence intervals)  that takes into account of both deterministic and probabilistic uncertainty, as 
well as it allows us to  make decisions by choosing the action that minimizes the expectation of some loss function 
(this is important, for instance, in control design). 
{ In the context of this paper a first attempt in combining deterministic and probabilistic uncertainty has been 
proposed in \cite{benavoli2013b}, while \cite{Combastel2015289}
has proposed a joint Zonotopic and Gaussian Kalman filter  for discrete-time LTV systems simultaneously subject to bounded disturbances and Gaussian noises.
The work  \cite{Canti6693825} instead proposes a Bayesian approach to set-membership estimation imposing a uniform distribution on
the membership-set similar to the idea proposed in \cite{Gning2010,Gning2011}. We will show that this approach is different from set-membership estimation, since set-membership estimation cannot be interpreted in the Bayesian framework, but only
in the framework of set of probability measures.}}}\\
Second, under this probabilistic interpretation, inferences in set-membership estimation  are carried out by computing expectations \DP{with respect to}  the
set $\mathcal{P}$ as in the probabilistic case. In particular, we show that the membership set $\mathcal{X}$ (i.e., the set that includes the state with guarantee)  can be obtained by computing the union of the supports of the probability measures in $\mathcal{P}$.
Moreover, we prove that a minimum volume convex outer-approximation of  $\mathcal{X}$ can simply be obtained by computing the set $\mathcal{M}$  that includes all the means computed \DP{with respect to}  the probabilities in $\mathcal{P}$.
The proof is not constructive, hence we do not have a convenient description of $\mathcal{M}$. However we show that we can determine the least conservative
half-space $\mathcal{H}$ that includes $M$, by solving a semi-infinite linear programming problem.
This problem is a special case of the truncated moment problem  
\cite{shohat1950problem,kreuin1977markov,lasserre2010moments} in which only the zero-th order moment is known (i.e., the 
support).\\
Third, by writing the dual of the above semi-infinite linear programming problem, we show that, if the nonlinearities in 
the measurement and process equations are polynomial and if the bounding sets for initial state, process and measurement 
noises are described by polynomial inequalities, then
a feasible solution of the dual can be obtained by simply checking the non-negativity  of a polynomial on a compact set described by polynomial inequalities.
An approximation of this semi-infinite linear programming problem can be
obtained by reformulating it as  semidefinite programming by using the theory of \emph{sum-of-squares} (SOS) polynomial optimization.
We prove that the approximate solution is robust, in the sense that the computed half-space $\mathcal{H}$  is guaranteed 
to include  $\mathcal{M}$, and so the membership set  $\mathcal{X}$.\\
Fourth,  we provide a procedure to determine the minimum-volume polytope $\mathcal{S}$  bounding $\mathcal{M}$.  This  procedure is based on a refinement of the algorithm originally proposed   in \cite{piga2012polytopic}  to compute an approximation of the minimum-volume polytope containing a given semialgebraic set.
In particular, we use a Monte Carlo integration approach to compute an approximation of the volume of a polytope,
and a greedy procedure to determine an outer-bounding polytope  $\mathcal{S}$ as the intersection of a pre-specified number of half-spaces $\mathcal{H}_j$, where each half-space $\mathcal{H}_j$ is added to the description of $\mathcal{S}$ so to minimize the volume of the polytope including $\mathcal{M}$. 
This allows us to solve the set-membership estimation problem for polynomial non-linear systems very efficiently and through convex optimization. 
\\ Finally, by means of a numerical example involving the Lotka Volterra prey-predator model, we show the effectiveness 
of our approach.



\section{Problem Description} \label{Sec:probdes}\vspace{-1mm}
Consider an uncertain non-linear discrete-time  dynamical system described by the difference equations:
\begin{equation}
\label{eq:setmembersys} \left\{
\begin{array}[pos]{rcl}
   \mathbf{x}(k)&=& \mathbf{a}_d(\mathbf{x}(k-1),k-1)+\mathbf{w}(k-1),\\
   \mathbf{y}(k)&=&\mathbf{c}_d(\mathbf{x}(k),k)+\mathbf{v}(k),\\
\end{array}\right.
\end{equation}
where $\mathbf{x}(k)=[x_1(k),\dots,x_n(k)]^\top\in \mathbb{R}^n$ is the state of the system  at the time $k$,
$\mathbf{y}(k)\in \mathbb{R}^m$ is the measured output vector, $\mathbf{w}(k-1) \in \mathbb{R}^n$ is the
process noise and $\mathbf{v}(k) \in \mathbb{R}^m$ is the measurement noise.
In this paper, we consider polynomial non-linearities $\mathbf{a}_d(\mathbf{x}(k),k)$ and $\mathbf{c}_d(\mathbf{x}(k),k)$, i.e.,
\begin{subequations} \label{eq:setmembersysa1tot}
\begin{align}
 \mathbf{a}_d(\mathbf{x}(k-1),k-1)=&\mathbf{A}_{k-1} \mathbf{q}_d(\mathbf{x}(k-1)), \label{eq:setmembersysa1} \\
 \mathbf{c}_d(\mathbf{x}(k),k)=&\mathbf{C}_k \mathbf{q}_d(\mathbf{x}(k)), \label{eq:setmembersysa2}
 \end{align}
\end{subequations}
with
\begin{equation}
\label{eq:moments}
\begin{array}{l}
 \mathbf{q}_d(\mathbf{x})= \\
~[1,x_1,\dots,x_n,x_1^2,x_1x_2,\dots,x_{n-1}x_{n},x_n^2,\dots,x_1^d,\dots,x_n^d]^\top
 \end{array}
\end{equation}
being the vector of all monomials of degrees less than or equal to $d$, which has dimension $s(d)={n+d \choose d} $,
and $\mathbf{A}_{k-1} \in \mathbb{R}^{n\times s(d)}$, $\mathbf{C}_k \in \mathbb{R}^{m\times s(d)}$
are known time-variant coefficient  matrices. The resulting system will be referred in the paper as uncertain time-variant polynomial system
of degree $d$.
{
\begin{example}
 \label{ex:0}
Let us consider the discrete-time polynomial system:
\begin{align}
\nonumber
 x_1(k)&= x_1(k-1)\left(2-x_1(k-1)\right)+w_1(k-1),\\
   \nonumber
   x_2(k)&= x_1(k-1)x_2(k-1)+0.5x_2(k-1)+w_2(k-1),
\end{align}
The output equation is given by:
${y}(k)=x_1(k)+x_2(k)+{v}(k)$. 
We can rewrite this system as in (\ref{eq:setmembersysa1})--(\ref{eq:setmembersysa2}):
\begin{align}
\nonumber
 \mathbf{q}_d(\mathbf{x})&=[1,x_1,x_2,x_1^2,x_1x_2,x_2^2]^\top\\
\nonumber
 \mathbf{A}_{k-1}&=\left[
\begin{matrix}
 0&2&0&-1&0&0\\
 0&0&0.5&0&1&0\\
\end{matrix}
\right]\\
\nonumber
\mathbf{C}_{k-1}&=\left[
\begin{matrix}
 0&1&1&0&0&0\\
 \end{matrix}
\right]
\end{align}
and therefore $n=2$, $m=1$, $d=2$ and $s(d)={n+d \choose d}=6$.
\end{example}}

We further assume that the only available information about the initial state
$\mathbf{x}(0)$ and the noises $\mathbf{w}(k),\mathbf{v}(k)$ is:
\begin{equation}
\label{eq:sm}
\begin{array}{l}
\mathbf{x}(0)\in \mathcal{X}_0, ~~\mathbf{w}(k)\in \mathcal{W}_k,~~\mathbf{v}(k)\in \mathcal{V}_k,
 \end{array}
\end{equation}
where $\mathcal{X}_0,\mathcal{W}_k,\mathcal{V}_k$ are compact basic semi-algebraic sets, i.e.,
compact sets  described by the polynomial inequalities:
\begin{equation}
\label{eq:semi}
\begin{array}{l}
\mathcal{W}_k=\left\{\mathbf{w}(k)\in \mathbb{R}^n: ~h^w_i(\mathbf{w}(k),k)\leq0, ~i=1,\dots,t_w\right\},
 \end{array}
\end{equation}
where $h^w_i$ (with $i=1,\dots,t_w$, $t_w \in \mathbb{N}$) are polynomial functions in the variable $\mathbf{w}(k)$. The  sets
$\mathcal{X}_0,\mathcal{V}_k$ are described in a similar manner.

This paper addresses a set-membership filtering problem, which aims at recursively estimating, at each time sample $k=1,2,\ldots,T_{\mathrm{o}}$, (an outer approximation of) the state uncertainty set $\mathcal{X}_k$, defined as the set of all values $\mathbf{x}(k)$ compatible with the available information, namely the system equations \eqref{eq:setmembersys}, the bounds on the initial state and on the noises  \eqref{eq:sm}, and the output observations $\mathbf{y}(1),\mathbf{y}(2),\ldots,\mathbf{y}(T_{\mathrm{o}})$. Formally, the set-membership filtering problem is defined as follows.

 \begin{problem}  \normalfont \textbf{[Set-membership filtering] \label{Problem:PWAregr}} \\
 Given the system equations \eqref{eq:setmembersys}, the observations, the bounding sets for the noises $\mathcal{W}_k,\mathcal{V}_k$ and the initial state uncertainty set $\mathcal{X}_0$, compute recursively the state uncertainty set $\mathcal{X}_k$ defined as:
 \begin{equation} \label{eqn:defXk}
 \begin{array}{ll}
 \mathcal{X}_k=\left\{ \right.  \mathbf{x}(k)\! \in \!\mathbb{R}^n\!\!: &   \mathbf{x}(k) \! - \! \mathbf{a}_d(\mathbf{x}(k-1),k-1) \! \in \! \DP{\mathcal{W}_{k-1},} \nonumber  \\
   &  \mathbf{y}(k)-\mathbf{c}_d(\mathbf{x}(k),k) \in \mathcal{V}_k, \nonumber \\
   &  \mathbf{x}(k-1) \in \mathcal{X}_{k-1}   \left. \right\}
 \end{array}
 \end{equation}
   for each $k=1,2,\ldots,T_{\mathrm{o}}$. \hfill $\blacksquare$
 \end{problem}
 Note that, in general, the sets $\mathcal{X}_k$ might be nonconvex and their representation can become more and more 
complicated as the time index $k$ increases. Under the assumption that $\mathcal{X}_k$ is bounded, algorithms for 
computing simple sets (e.g., boxes, parallelotopes, zonotops or ellipsoidal regions) outer-bounding the state 
uncertainty sets $\mathcal{X}_k$  have been then proposed to reduce this complexity. 
After formulating the set-membership filtering problem in a probabilistic setting, this paper presents an algorithm for computing (an approximation of) the minimum-volume polytope outer-bounding the  sets $\mathcal{X}_k$.
%


\section{A probabilistic framework for set-membership estimation}
 Set-membership estimation is usually formulated in the context of set-valued calculus.
We will show in the following paragraph that  set-membership estimation can  be equivalently formulated in the probabilistic setting by employing sets
of probability measures. 
Consider the set-membership constraint $\mathbf{x}\in \mathcal{X}$ (the time index is dropped for brevity of notation)
with $\mathcal{X}\subset \mathbb{R}^n$. This constraint can be translated in a probabilistic setting
by saying that the only probabilistic information on the value $\mathbf{x}$ of the variable $\mathbf{X}$ is
that it belongs to the set $\mathcal{X}$, or equivalently,
$$
\Pr(X \in \mathcal{X})=\Pr(\mathcal{X})=1,
$$
where $\Pr$ is a probability measure on $\mathcal{X}$.\footnote{To clarify this aspect, consider the experiment of rolling a  dice. Assume that the probability  $\Pr$ of the outcomes $x$ of the dice is completely unknown, then the only knowledge about the experiment
is that $x \in \mathcal{X}=\{1,2,3,4,5,6\}$, or, equivalently, that $\Pr(\{1,2,3,4,5,6\})=1$. Therefore, the statement $\Pr(\mathcal{X})=1$
is a model for our (epistemic) uncertainty about the probabilities of the dice outcomes. We only know that $x \in \{1,2,3,4,5,6\}$.}
More precisely $\Pr$ is a nonnegative Borel measure on $\mathcal{X}$.\footnote{The sample space is $\mathbb{R}^n$ and we are considering the Borel
$\sigma$-algebra. $\mathcal{X}$ is assumed to be an element of the $\sigma$-algebra.}
In other words, this means that we only know the support of the probability measure of the variable $\mathbf{X}$.

The support does not uniquely define a probability measure, as  there  \DP{are an indefinite number of probability measures} with support $\mathcal{X}$.\footnote{The uniform distribution is one of them, but it is not the only one. So by considering only the uniform distribution as in \cite{Canti6693825}, we loose the full equivalence with set-membership.} Hence,
  $\mathbf{x}\in \mathcal{X}$ is equivalent to the constraint
that the probability measure of $\mathbf{x}$ belongs to the set  $\mathcal{P}_{\mathcal{X}}(\mathbf{X})$,
that is the set of all probability measures on the variable $\mathbf{X}$  with support $ \mathcal{X}$.
Let us define with $P$ the Cumulative Distribution Function (CDF) of the probability measure $\Pr$.
For instance on $\mathbb{R}$ we have that $P(x)=\Pr(-\infty,x]$ (this definition can easily be extended to $\mathbb{R}^n$).
Then we can easily characterize the set of probability measures $\mathcal{P}_{\mathcal{X}}(\mathbf{X})$ as follows:
\begin{equation}
\label{eq:probconstr0}
\begin{array}{l}
\mathcal{P}_{\mathcal{X}}(\mathbf{X})=\left\{P: ~~\int_{\mathcal{X}} dP(\mathbf{x})=1\right\},
 \end{array}
\end{equation}
where the integral is a Lebesgue-Stieltjes integral \DP{with respect to}  $P$. Hence, because
of the equivalence between Borel probability measures and cumulative distributions,
hereafter we will use  interchangeably $\Pr$ and $P$.

\subsection{Inference on the state}
In state estimation, we are interested in making inferences about $\mathbf{X}$
or, equivalently, computing expectations of real-valued functions $g$ of $\mathbf{X}$.
Since there \DP{are an indefinite number of probability measures} with support $ \mathcal{X}$, we cannot compute
a single expectation of $g$. However, we can compute upper and lower \DP{bounds} for the expectation of $g$ \DP{with respect to}  the probability measures $\Pr$ with support $ \mathcal{X}$. For instance, the upper bound for the  expectation of $g$ is given by the solution of the optimization problem:
 \begin{equation}
\label{eq:probconstroptim}
\begin{array}{l}
\sup\limits_{P} \int_{\mathcal{X}} g(\mathbf{x}) dP(\mathbf{x}),\\
s.t.~~P \in \mathcal{P}_{\mathcal{X}}(\mathbf{X}),
 \end{array}
\end{equation}
which is a semi-infinite linear program, since it has a finite number constraints
and an infinite dimensional variable (the probability measure $\Pr$).
Note that we use ``\emph{sup}'' instead of ``$\emph{max}$'' to indicate that an optimal solution might not be attained.
The lower bound of the expectation can be obtained by replacing \emph{sup} with \emph{inf}.

Problem (\ref{eq:probconstroptim}), i.e.,
determining an upper bound for the expectation  of $g$ \DP{with respect to}  the probability measure $\Pr$ given the knowledge of its support $ \mathcal{X}$, is a special case
of the truncated moment problem  \cite{shohat1950problem,kreuin1977markov,lasserre2010moments} in which only the zero-th order moment is known (i.e., the support).
Hence, we have the following result \cite{karr1983extreme},  \cite[Lemma 3.1]{shapiro2001duality}:

\begin{proposition}
\label{prop:1}
The optimum of  (\ref{eq:probconstroptim}) is obtained by an atomic measure\footnote{An atomic measure in $\mathbb{R}^n$ is a measure which accepts as an argument a subset $A$ of $\mathbb{R}^n$, and returns $\delta_{\mathbf{x}}(A) = 1$ if $\mathbf{x} \in A$,  zero otherwise.} $\Pr=\delta_{\hat{\mathbf{x}}}$, where
$\hat{\mathbf{x}} =\arg\sup_{\mathbf{x} \in \mathcal{X}} g(\mathbf{x})$.
\end{proposition}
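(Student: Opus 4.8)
The plan is to prove the two inequalities that together pin the optimum to the value $g(\hat{\mathbf{x}})$ and to exhibit the atomic measure that attains it. First I would establish the upper bound. For every feasible $P \in \mathcal{P}_{\mathcal{X}}(\mathbf{X})$ the pointwise inequality $g(\mathbf{x}) \le \sup_{\mathbf{x}' \in \mathcal{X}} g(\mathbf{x}')$ holds on $\mathcal{X}$. By monotonicity of the Lebesgue--Stieltjes integral together with the normalisation constraint $\int_{\mathcal{X}} dP(\mathbf{x}) = 1$ built into the definition of $\mathcal{P}_{\mathcal{X}}(\mathbf{X})$, integrating this inequality gives
$$\int_{\mathcal{X}} g(\mathbf{x})\, dP(\mathbf{x}) \le \left(\sup_{\mathbf{x}' \in \mathcal{X}} g(\mathbf{x}')\right) \int_{\mathcal{X}} dP(\mathbf{x}) = \sup_{\mathbf{x}' \in \mathcal{X}} g(\mathbf{x}').$$
Since this holds uniformly over all feasible $P$, the value of the semi-infinite program \eqref{eq:probconstroptim} is bounded above by $\sup_{\mathbf{x} \in \mathcal{X}} g(\mathbf{x})$.

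Second I would show this bound is achieved by an atomic measure. Since $\mathcal{X}$ is a compact basic semi-algebraic set and $g$ is continuous (in the setting of this paper $g$ is a polynomial), the supremum is attained at some point $\hat{\mathbf{x}} = \arg\sup_{\mathbf{x} \in \mathcal{X}} g(\mathbf{x}) \in \mathcal{X}$. The Dirac measure $\delta_{\hat{\mathbf{x}}}$ is a valid Borel probability measure with $\int_{\mathcal{X}} d\delta_{\hat{\mathbf{x}}}(\mathbf{x}) = 1$ because $\hat{\mathbf{x}} \in \mathcal{X}$, so $\delta_{\hat{\mathbf{x}}} \in \mathcal{P}_{\mathcal{X}}(\mathbf{X})$ is feasible. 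Evaluating the objective at this measure yields $\int_{\mathcal{X}} g(\mathbf{x})\, d\delta_{\hat{\mathbf{x}}}(\mathbf{x}) = g(\hat{\mathbf{x}}) = \sup_{\mathbf{x} \in \mathcal{X}} g(\mathbf{x})$, so the upper bound of the first step is met with equality and the optimum is attained precisely by $\Pr = \delta_{\hat{\mathbf{x}}}$.

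The step most deserving of care is the attainment: one must ensure that the maximiser $\hat{\mathbf{x}}$ exists, which relies on compactness of $\mathcal{X}$ and continuity of $g$, and one must check that the atomic measure is admissible under the definition of $\mathcal{P}_{\mathcal{X}}(\mathbf{X})$. Here the relevant reading is that $P$ need only be concentrated on $\mathcal{X}$, i.e. $P(\mathcal{X}) = 1$, for which $\delta_{\hat{\mathbf{x}}}$ qualifies since the singleton $\{\hat{\mathbf{x}}\}$ is a Borel subset of $\mathcal{X}$. An alternative, more structural route mirrors the cited extreme-point arguments of \cite{karr1983extreme,shapiro2001duality}: the feasible set $\mathcal{P}_{\mathcal{X}}(\mathbf{X})$ is convex and weak-$\ast$ compact with the Dirac measures as its extreme points, and a linear functional attains its supremum over such a set at an extreme point, which again singles out an atomic optimiser. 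I expect the direct two-inequality argument above to be cleaner and entirely self-contained.
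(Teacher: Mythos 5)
Your proposal is correct and follows essentially the same route as the paper: bound $\int_{\mathcal{X}} g(\mathbf{x})\,dP(\mathbf{x})$ above by $\sup_{\mathbf{x}\in\mathcal{X}} g(\mathbf{x})$ for every feasible $P$, then observe that the Dirac measure $\delta_{\hat{\mathbf{x}}}$ attains this value. You are somewhat more explicit than the paper about why the supremum is attained (compactness of $\mathcal{X}$ and continuity of $g$) and why $\delta_{\hat{\mathbf{x}}}$ is feasible, but the core argument is identical.
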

Note in fact that, $\forall \Pr \in \mathcal{P}_{\mathcal{X}}(\mathbf{X})$, with associated CDF $P$,
$$
E[g]=\int_{\mathcal{X}} g(\mathbf{x})dP(\mathbf{x})\leq \int_{\mathcal{X}} g(\mathbf{x})\delta_{\hat{\mathbf{x}}}(d\mathbf{x})=g(\hat{\mathbf{x}}),
$$
where $g(\hat{\mathbf{x}})$, by definition of $\hat{\mathbf{x}}$, is the supremum   of $g$ on $\mathcal{X}$.
  The first integral must be understood as a Lebesgue-Stieltjes integral \DP{with respect to}  the cumulative
  distribution of an atomic measure on $\mathbb{R}^n$. This means that $dP(\mathbf{x})$ denotes
  the distributional derivative of the cumulative distribution of an atomic measure,  that are in our case Dirac measures $\delta_{\hat{\mathbf{x}}}(d\mathbf{x})$ (hence the second integral).
From this result, it follows \DP{that the} probability measures that
gives the lower and upper bounds for the expectation of $g$ are atomic (discrete) measures.\\
In order to formulate the set-membership filtering problem in a probabilistic framework \DP{it} is useful to exploit a result
derived by Karr in \cite{karr1983extreme}, where it is  proven that  the set of probability measures  $\mathcal{P}_{\mathcal{X}}(\mathbf{X})$ which are feasible for the semi-infinite linear program
problem (\ref{eq:probconstroptim}) is convex and compact with respect to the weak$^*$ topology.
As a result, $\mathcal{P}_{\mathcal{X}}(\mathbf{X})$  can be expressed as the convex hull of its extreme points and, according to Proposition \ref{prop:1}, these extreme points are
atomic measures on $\mathcal{X}$, i.e.:
 \begin{equation}
\label{eq:probconstroptimequiv}
\begin{array}{l}
\mathcal{P}_{\mathcal{X}}(\mathbf{X})\equiv Co\left\{\delta_{\hat{\mathbf{x}}}:~\hat{\mathbf{x}}\in  \mathcal{X}\right\},
 \end{array}
\end{equation}
where $\equiv$ means equivalent in terms of inferences (expectations).
Summing up what we have obtained so far:
\begin{enumerate}
 \item the set-membership constraint $\mathbf{x} \in \mathcal{X}$  is equivalent to (\ref{eq:probconstr0});
 \item for the inferences, $\mathcal{P}_{\mathcal{X}}(\mathbf{X})$ is equivalent to the convex hull of all atomic measures on $\mathcal{X}$, \eqref{eq:probconstroptimequiv}.
\end{enumerate}
Hence, we can derive the prediction and updating step for set-membership estimation by applying  the
Chapman-Kolmogorov equation and Bayes' rule to the set of probability measures  in  \eqref{eq:probconstroptimequiv}.
This means that, by reformulating set-membership constraints in a probabilistic way, we can reformulate set-membership estimation in the realm of stochastic (probabilistic) filtering applied to set of probability measures.


\subsection{Propagating in time and updating  set of distributions}
We start by deriving the set-membership filtering prediction step by applying  the
Chapman-Kolmogorov equation.

\begin{theorem}[Prediction]
\label{Th:prediction}
Consider the system equation in (\ref{eq:setmembersys}) with $\mathbf{w}(k-1) \in \mathcal{W}_{k-1}$ and assume
 that the only  probabilistic knowledge about $\mathbf{X}(k-1)$ is the support $\mathcal{X}_{k-1}$.
 Then it follows that the probability measure $\Pr$ on the value $\mathbf{x}(k)$ of the state at time $k$ belongs to the set
 \begin{equation}
\label{eq:probconstr}
\begin{array}{l}
\hat{\mathcal{P}}_{{\hat{\mathcal{X}}_k}}(\mathbf{X}(k))\equiv Co\left\{\delta_{\hat{\mathbf{x}}}:~\hat{\mathbf{x}}\in  {\hat{\mathcal{X}}_k}\right\}, 
 \end{array}
\end{equation}
with
 \begin{align}
\nonumber
{\hat{\mathcal{X}}_k}=\Big\{\mathbf{x}(k): \mathbf{x}(k)=\mathbf{a}_d(\mathbf{x}(k-1),k-1)+\mathbf{w}(k-1)\\
\label{eq:predictionv}
~with~ \mathbf{x}(k-1) \in \mathcal{X}_{k-1}, ~\mathbf{w}(k-1) \in \mathcal{W}_{k-1}\Big\},
\end{align}
or equivalently:
 \begin{align}
\nonumber
{\hat{\mathcal{X}}_k}=\Big\{\mathbf{x}(k): \mathbf{x}(k)-\mathbf{a}_d(\mathbf{x}(k-1),k-1) \in \mathcal{W}_{k-1} \\
\label{eq:predictionv2}
~with~ \mathbf{x}(k-1) \in \mathcal{X}_{k-1}\Big\}.
\end{align}
\end{theorem}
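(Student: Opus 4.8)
The plan is to apply the Chapman-Kolmogorov equation point-wise to the set of probability measures describing $\mathbf{X}(k-1)$, exploiting the representation of such a set as the convex hull of atomic measures established in (\ref{eq:probconstroptimequiv}). First I would encode the prior knowledge jointly: since the only information on $\mathbf{X}(k-1)$ is its support $\mathcal{X}_{k-1}$ and the only information on $\mathbf{W}(k-1)$ is its support $\mathcal{W}_{k-1}$, the joint measure of $(\mathbf{X}(k-1),\mathbf{W}(k-1))$ is an arbitrary element of the set of all Borel measures supported on the product $\mathcal{X}_{k-1}\times\mathcal{W}_{k-1}$. Invoking the equivalence (\ref{eq:probconstroptimequiv}) on the product space, this set is, for the purpose of inferences, the convex hull $Co\{\delta_{(\hat{\mathbf{x}},\hat{\mathbf{w}})}:(\hat{\mathbf{x}},\hat{\mathbf{w}})\in\mathcal{X}_{k-1}\times\mathcal{W}_{k-1}\}$.

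Next I would observe that, because $\mathbf{x}(k)$ is a deterministic function of $(\mathbf{x}(k-1),\mathbf{w}(k-1))$, the transition kernel entering the Chapman-Kolmogorov equation is degenerate: conditioned on $(\mathbf{x}(k-1),\mathbf{w}(k-1))=(\bar{\mathbf{x}},\bar{\mathbf{w}})$, the state $\mathbf{x}(k)$ equals $T(\bar{\mathbf{x}},\bar{\mathbf{w}}):=\mathbf{a}_d(\bar{\mathbf{x}},k-1)+\bar{\mathbf{w}}$ with probability one. Hence the predicted measure on $\mathbf{X}(k)$ is exactly the pushforward (image measure) of the joint measure under the continuous map $T$. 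The key step is then that pushforward is a \emph{linear} operation on measures that sends $\delta_{(\hat{\mathbf{x}},\hat{\mathbf{w}})}$ to $\delta_{T(\hat{\mathbf{x}},\hat{\mathbf{w}})}$; consequently it maps the convex hull of the atomic measures on $\mathcal{X}_{k-1}\times\mathcal{W}_{k-1}$ onto the convex hull of the atomic measures $\{\delta_{\hat{\mathbf{x}}}:\hat{\mathbf{x}}\in T(\mathcal{X}_{k-1}\times\mathcal{W}_{k-1})\}$.

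To close the argument I would identify the image set $T(\mathcal{X}_{k-1}\times\mathcal{W}_{k-1})$ with $\hat{\mathcal{X}}_k$: a point $\mathbf{x}(k)$ lies in the image if and only if $\mathbf{x}(k)=\mathbf{a}_d(\bar{\mathbf{x}},k-1)+\bar{\mathbf{w}}$ for some $\bar{\mathbf{x}}\in\mathcal{X}_{k-1}$ and $\bar{\mathbf{w}}\in\mathcal{W}_{k-1}$, which is precisely the definition (\ref{eq:predictionv}); rearranging the constraint as $\mathbf{x}(k)-\mathbf{a}_d(\bar{\mathbf{x}},k-1)\in\mathcal{W}_{k-1}$ gives the equivalent form (\ref{eq:predictionv2}). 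Since $\mathcal{X}_{k-1}$ and $\mathcal{W}_{k-1}$ are compact and $T$ is continuous (the nonlinearity $\mathbf{a}_d$ is polynomial), $\hat{\mathcal{X}}_k$ is compact, so the representation (\ref{eq:probconstroptimequiv}) — together with Karr's convexity and weak$^{*}$ compactness guarantee — applies to $\hat{\mathcal{X}}_k$ and yields (\ref{eq:probconstr}).

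The hard part will be the rigorous justification that taking pushforwards commutes with the convex-hull / extreme-point representation, i.e. that the predicted set of measures is genuinely recovered as the convex hull of the pushed-forward Dirac measures rather than merely contained in it. This requires checking that weak$^{*}$ compactness is preserved under the continuous map $T$, so that the Krein-Milman representation applies to the image set, and that every atomic measure $\delta_{\hat{\mathbf{x}}}$ with $\hat{\mathbf{x}}\in\hat{\mathcal{X}}_k$ is indeed attained as the image of some $\delta_{(\hat{\mathbf{x}},\hat{\mathbf{w}})}$ on the product set, which follows from surjectivity of $T$ onto its image. The remaining identifications between the two descriptions of $\hat{\mathcal{X}}_k$ are routine algebraic rearrangements.
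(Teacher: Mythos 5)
Your argument is correct and follows essentially the same route as the paper: the paper applies the Chapman--Kolmogorov equation point-wise to the Dirac extreme points of $\mathcal{P}(\mathbf{X}(k)|\mathbf{x}(k-1))$ and $\mathcal{P}_{\mathcal{X}_{k-1}}(\mathbf{X}(k-1))$, obtaining $\delta_{\mathbf{a}_d(\hat{\mathbf{x}},k-1)+\hat{\mathbf{w}}}$, which is exactly your pushforward of $\delta_{(\hat{\mathbf{x}},\hat{\mathbf{w}})}$ under the map $T$. Your joint-measure/pushforward phrasing is a repackaging of the same computation, and the step you flag as the hard part (that the pushforward of the convex hull is exactly the convex hull of the pushed-forward Dirac measures) is likewise taken for granted, rather than proved, in the paper.
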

\begin{proof}
Let us consider the time instant $k$.  From the system equation in (\ref{eq:setmembersys}), $\mathbf{w}(k-1) \in \mathcal{W}_{k-1}$ and \eqref{eq:probconstroptimequiv}, it follows that
 $$
\begin{array}{l}
\mathcal{P}(\mathbf{X}(k)|\mathbf{x}(k-1))\\
\equiv Co\left\{\delta_{\mathbf{a}_d(\mathbf{x}(k-1),k-1)+\mathbf{\hat{w}}}:~~\mathbf{\hat{w}} \in \mathcal{W}_{k-1}\right\},\end{array}
 $$
 this is the conditional set of probability measures for the variable $\mathbf{X}(k)$ given the value $\mathbf{x}(k-1)$ of the variable
 $\mathbf{X}(k-1)$
Hence, since $\mathbf{X}(k-1) \in \mathcal{X}_{k-1}$ and so the set of probability measures for the variable $\mathbf{X}(k-1)$ is
$$
\begin{array}{l}
\mathcal{P}_{\mathcal{X}_{k-1}}(\mathbf{X}(k-1))\equiv Co\left\{\delta_{\hat{\mathbf{x}}}:~\hat{\mathbf{x}}\in  \mathcal{X}_{k-1}\right\},
 \end{array}
$$
  by applying the Chapman-Kolmogorov equation point-wise to the probability measures  $\Pr(\cdot|\mathbf{x}(k-1))$ in $\mathcal{P}(\mathbf{X}(k)|\mathbf{X}(k-1))$ and  $\text{Qr}$    in $\mathcal{P}(\mathbf{X}(k-1))$
  we obtain
 \begin{align} \label{eq:Px}
 \nonumber
  &\Pr(\mathbf{x}(k))=\int_{\mathbb{R}^n} \int_{\mathbb{R}^n} I_{\mathbf{x}(k)}(\mathbf{x}') dP(\mathbf{x}'|\mathbf{x}(k-1))dQ(\mathbf{x}(k-1))\\
 \nonumber
 &=\int_{\mathbb{R}^n} \int_{\mathbb{R}^n} I_{\mathbf{x}(k)}(\mathbf{x}')  \delta_{\mathbf{a}_d(\mathbf{x}(k-1),k-1)+\mathbf{\hat{w}}}(d\mathbf{x}') \delta_{\hat{\mathbf{x}}}(d\mathbf{x}(k-1))\\
  \nonumber
  &=\int_{\mathbb{R}^n} \delta_{\mathbf{a}_d(\mathbf{x}(k-1),k-1)+\mathbf{\hat{w}}}(\mathbf{x}(k)) \delta_{\hat{\mathbf{x}}}(d\mathbf{x}(k-1))\\
  \nonumber
  &=\delta_{\mathbf{a}_d(\hat{\mathbf{x}},k-1)+\mathbf{\hat{w}}}(\mathbf{x}(k))\\
  \end{align}
   where $I_{\mathbf{x}(k)}(\mathbf{x}')$ denotes the indicator function\footnote{$I_{\mathbf{x}(k)}(\mathbf{x}')=1$ when $\mathbf{x}(k)=\mathbf{x}'$ and zero otherwise.}   and with $\hat{\mathbf{x}} \in \mathcal{X}_{k-1}$ and $\mathbf{\hat{w}} \in \mathcal{W}_{k-1}$ and where we have exploited the fact that
   $ \int_{\mathbb{R}^n} I_{\mathbf{x}(k)}(\mathbf{x}')  \delta_{\mathbf{a}_d(\mathbf{x}(k-1),k-1)+\mathbf{\hat{w}}}(d\mathbf{x}')=I_{\mathbf{x}(k)}(\mathbf{a}_d(\mathbf{x}(k-1),k-1)+\mathbf{\hat{w}})=\delta_{\mathbf{a}_d(\mathbf{x}(k-1),k-1)+\mathbf{\hat{w}}}(\mathbf{x}(k))$.
     From \eqref{eq:probconstroptimequiv}, \eqref{eq:Px} and the definition of ${\hat{\mathcal{X}}_k}$, the theorem follows. \qued\\
   \end{proof}
   ~\\
   {
Theorem \ref{Th:prediction} shows that, by applying  the Chapman-Kolmogorov equation point-wise to the probability measures in $\mathcal{P}(\mathbf{X}(k)|\mathbf{x}(k-1))$ and $\mathcal{P}_{\mathcal{X}_{k-1}}(\mathbf{X}(k-1))$, we can obtain a set of probability measures $\mathcal{P}_{\hat{\mathcal{X}}_k}(\mathbf{X}(k))$,
which is completely defined by its support and whose support coincides with the one obtained in set-membership estimation after the prediction step.}

We now derive a similar result for  the updating step.

\begin{theorem}[Updating] \label{Th:updating}
 Consider the measurement equation in (\ref{eq:setmembersys}) with $\mathbf{v}(k) \in \mathcal{V}_k$ and assume
 that the only  probabilistic knowledge about $\mathbf{x}(k)$ is described by (\ref{eq:probconstr})--(\ref{eq:predictionv}).
 Then it follows that the updated probability measures $P$ on the value $\mathbf{x}(k)$ of the state at time $k$ belongs to the set:
 \begin{equation}
\label{eq:probconstr1}
\begin{array}{l}
\mathcal{P}_{\mathcal{X}_{k}}(\mathbf{X}(k))\equiv Co\left\{\delta_{\hat{\mathbf{x}}}:~\hat{\mathbf{x}}\in  {\mathcal{{X}}_k}\right\}, 
 \end{array}
\end{equation}
where
 \begin{align} \label{eq:defXk2}
{\mathcal{X}_{k}}&={\hat{\mathcal{X}}_{k}} \cap \mathcal{Y}_{k},
\end{align}
with
\begin{align} \label{def:Yk}
 \mathcal{Y}_{k}&=\{\mathbf{x}(k): \mathbf{y}(k)-\mathbf{c}_d(\mathbf{x}(k),k)\in \mathcal{V}_k\}.
 \end{align}
\end{theorem}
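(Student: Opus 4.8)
The plan is to mirror the proof of Theorem~\ref{Th:prediction}, replacing the Chapman--Kolmogorov equation by Bayes' rule and applying it point-wise to the extreme points of the set of measures. First I would write down the conditional (likelihood) set of probability measures induced by the measurement equation $\mathbf{y}(k)=\mathbf{c}_d(\mathbf{x}(k),k)+\mathbf{v}(k)$ together with $\mathbf{v}(k)\in\mathcal{V}_k$. Exactly as in the prediction step, for a fixed value $\mathbf{x}(k)$ this is $\mathcal{P}(\mathbf{Y}(k)\mid\mathbf{x}(k))\equiv Co\{\delta_{\mathbf{c}_d(\mathbf{x}(k),k)+\hat{\mathbf{v}}}:\hat{\mathbf{v}}\in\mathcal{V}_k\}$, and the predicted set of measures on $\mathbf{x}(k)$ is the hull $Co\{\delta_{\hat{\mathbf{x}}}:\hat{\mathbf{x}}\in\hat{\mathcal{X}}_k\}$ supplied by Theorem~\ref{Th:prediction}. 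Because both sets are convex hulls of atomic extreme points (Proposition~\ref{prop:1} together with Karr's convexity/compactness result), it suffices to update an arbitrary pair of extreme points and then take the convex hull of the outcomes.

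The key step is the point-wise Bayes computation. Fixing a prior atom $\delta_{\hat{\mathbf{x}}}$ with $\hat{\mathbf{x}}\in\hat{\mathcal{X}}_k$ and a likelihood atom indexed by $\hat{\mathbf{v}}\in\mathcal{V}_k$, the likelihood evaluated at the observed $\mathbf{y}(k)$ is the indicator $I_{\mathbf{y}(k)}(\mathbf{c}_d(\cdot,k)+\hat{\mathbf{v}})$, so Bayes' rule gives the posterior
\begin{equation}
\nonumber
P(\mathbf{x}(k)\mid\mathbf{y}(k))=\frac{I_{\mathbf{y}(k)}\big(\mathbf{c}_d(\hat{\mathbf{x}},k)+\hat{\mathbf{v}}\big)\,\delta_{\hat{\mathbf{x}}}}{\int I_{\mathbf{y}(k)}\big(\mathbf{c}_d(\mathbf{x}',k)+\hat{\mathbf{v}}\big)\,\delta_{\hat{\mathbf{x}}}(d\mathbf{x}')}.
\end{equation}
Since the prior mass sits entirely at $\hat{\mathbf{x}}$, the normalizing integral equals $I_{\mathbf{y}(k)}(\mathbf{c}_d(\hat{\mathbf{x}},k)+\hat{\mathbf{v}})$, so whenever this indicator is $1$ the posterior collapses back to $\delta_{\hat{\mathbf{x}}}$, while when it is $0$ the observation $\mathbf{y}(k)$ has zero probability under this atom and the atom is discarded. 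The indicator equals $1$ for some admissible $\hat{\mathbf{v}}\in\mathcal{V}_k$ precisely when $\mathbf{y}(k)-\mathbf{c}_d(\hat{\mathbf{x}},k)\in\mathcal{V}_k$, that is, when $\hat{\mathbf{x}}\in\mathcal{Y}_k$ as defined in \eqref{def:Yk}.

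Finally I would collect the surviving extreme points: the posterior atoms are exactly the $\delta_{\hat{\mathbf{x}}}$ with $\hat{\mathbf{x}}\in\hat{\mathcal{X}}_k$ and $\hat{\mathbf{x}}\in\mathcal{Y}_k$, i.e. $\hat{\mathbf{x}}\in\hat{\mathcal{X}}_k\cap\mathcal{Y}_k=\mathcal{X}_k$ by \eqref{eq:defXk2}. Taking the convex hull and invoking once more the extreme-point characterization then yields $\mathcal{P}_{\mathcal{X}_k}(\mathbf{X}(k))\equiv Co\{\delta_{\hat{\mathbf{x}}}:\hat{\mathbf{x}}\in\mathcal{X}_k\}$, which is \eqref{eq:probconstr1}. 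The step I expect to be delicate is the rigorous handling of Bayes' rule for atomic measures whose likelihood is an indicator rather than a density: one must justify the $0/0$ elimination of incompatible atoms and argue that applying Bayes point-wise to the extreme points and then taking the convex hull indeed reproduces the whole updated set of measures. This commutation is what lets the set-membership intersection $\hat{\mathcal{X}}_k\cap\mathcal{Y}_k$ emerge from the probabilistic update, and it rests on the linearity of the Bayes operator in the prior together with the convexity and weak$^*$ compactness of $\mathcal{P}_{\mathcal{X}}(\mathbf{X})$ established by Karr.
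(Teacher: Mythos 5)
Your proposal is correct and follows essentially the same route as the paper: both apply Bayes' rule point-wise to the atomic extreme points, identify the normalizing denominator with the compatibility indicator $I_{\mathbf{y}(k)}(\mathbf{c}_d(\hat{\mathbf{x}},k)+\hat{\mathbf{v}})$, discard the atoms for which it vanishes (the paper justifies this via Walley's regular extension), and conclude that the surviving atoms are exactly those with $\hat{\mathbf{x}}\in\hat{\mathcal{X}}_k\cap\mathcal{Y}_k=\mathcal{X}_k$. The delicate point you flag about the $0/0$ elimination is precisely the point the paper addresses by restricting Bayes' rule to measures with strictly positive denominator.
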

\begin{proof}
Observe that, at each time $k$,
$$
\mathcal{P}(\mathbf{Y}(k)|\mathbf{x}(k))\equiv Co\left\{\delta_{\mathbf{c}_d(\mathbf{x}(k),k)+\mathbf{\hat{v}}}:~\mathbf{\hat{v}}\in \mathcal{V}_k\right\}.
$$
Then, the updating step consists of applying Bayes'rule to the probability measures  $\mathcal{P}(\mathbf{Y}(k)|\mathbf{x}(k))$  and to
$Q$    in $\mathcal{\hat{P}}_{\hat{\mathcal{X}}_{k}}(\mathbf{X}(k))$:
\begin{align}
\nonumber
dP(\mathbf{x}(k)|\mathbf{y}(k))&=\dfrac{\int_{\mathbb{R}^m} I_{\mathbf{y}(k)}(\mathbf{y}')dP(\mathbf{y}'|\mathbf{x}(k))dQ(\mathbf{x}(k)) }{\int_{\mathbb{R}^n} \int_{\mathbb{R}^m} I_{\mathbf{y}(k)}(\mathbf{y}')dP(\mathbf{y}'|\mathbf{x}(k))dQ(\mathbf{x}(k)) }\\
\nonumber
&=\dfrac{\Pr(\mathbf{y}(k)|\mathbf{x}(k))dQ(\mathbf{x}(k)) }{\int_{\mathbb{R}^n} \Pr(\mathbf{y}(k)|\mathbf{x}(k)) dQ(\mathbf{x}(k)) },
\end{align}
where we have exploited the fact that
$$
\Pr(\mathbf{y}(k)|\mathbf{x}(k))=\int_{\mathbb{R}^m} I_{\mathbf{y}(k)}(\mathbf{y}')dP(\mathbf{y}'|\mathbf{x}(k)).
$$
Note that the probability of a point on $\mathbb{R}^n$ can be nonzero since $\Pr$ is an atomic measure.
In order to apply Bayes' rule we need to ensure that the denominator is strictly greater than zero:
$$
\begin{array}{l}
\int_{\mathbb{R}^n} \Pr(\mathbf{y}(k)|\mathbf{x}(k))dQ(\mathbf{x}(k))\\
=\int_{\mathbb{R}^n} \delta_{\mathbf{c}_d(\mathbf{x}(k),k)+\mathbf{\hat{v}}}(\mathbf{y}(k))\delta_{\hat{\mathbf{x}}}(d\mathbf{x}(k))>0.
\end{array}
$$
Hence, the above inequality holds if and only if $\hat{\mathbf{x}}$ and $\mathbf{\hat{v}}$ are chosen, \DP{at time $k$,}  such that:
\begin{equation}
\label{eq:consdenom}
\mathbf{c}_d(\hat{\mathbf{x}},k)+\mathbf{\hat{v}}=\mathbf{y}(k).
\end{equation}
Bayes' rule is only defined for those probability measures for which
the denominator is strictly positive, that implies that the above equality must be satisfied.\footnote{This way of updating set of probability measures has been proposed by Walley  \cite[Appendix J]{walley1991} under
the name of regular extension.}
The  equality  \eqref{eq:consdenom} can be satisfied only if $\hat{\mathbf{x}} \in \mathcal{Y}_{k}$ which, together with the constraint $\hat{\mathbf{x}}\in \mathcal{\hat{X}}_k$,
implies that
$$
\hat{\mathbf{x}} \in {\hat{\mathcal{X}}_{k}} \cap \mathcal{Y}_{k}.
$$
Under the constraint \eqref{eq:consdenom},  it follows that $\delta_{\mathbf{c}_d(\hat{\mathbf{x}},k)+\mathbf{\hat{v}}}(\mathbf{y}(k))=1$ and, thus,  the denominator is equal to one.
Hence, we have that
$$
\begin{array}{l}
dP(\mathbf{x}(k)|\mathbf{y}(k))=\int_{\mathbb{R}^m} I_{\mathbf{y}(k)}(\mathbf{y}')dP(\mathbf{y}'|\mathbf{x}(k))dQ(\mathbf{x}(k))\\
=\int_{\mathbb{R}^m} I_{\mathbf{y}(k)}(\mathbf{y}')\delta_{\mathbf{c}_d(\mathbf{x}(k),k)+\mathbf{\hat{v}}}(\mathbf{y}(k))\delta_{\hat{\mathbf{x}}}(d\mathbf{x}(k))\\
=\delta_{\mathbf{c}_d(\mathbf{x}(k),k)+\mathbf{\hat{v}}}(\mathbf{y}(k))\delta_{\hat{\mathbf{x}}}(d\mathbf{x}(k))\\
=\delta_{\mathbf{c}_d(\hat{\mathbf{x}},k)+\mathbf{\hat{v}}}(\mathbf{y}(k))\delta_{\hat{\mathbf{x}}}(d\mathbf{x}(k))\\
=\delta_{\hat{\mathbf{x}}}(d\mathbf{x}(k))
\end{array}
$$
with $\hat{\mathbf{x}} \in {\hat{\mathcal{X}}_{k}} \cap \mathcal{Y}_{k}$. Hence, the updated probability measure $\Pr(\cdot|\mathbf{y}(k))$
on the values of the state at time $k$ is $\Pr(\cdot|\mathbf{y}(k))= \delta_{\hat{\mathbf{x}}}$, which proves the theorem. \qued
\end{proof}

From Theorem \ref{Th:updating}, the support of the updated probability measure $\Pr$ on the value $\mathbf{x}(k)$ of the state at time $k$ is given by ${\mathcal{X}_{k}}$, i.e.,
 \begin{equation}
 \int_{{\mathcal{X}_{k}}}dP(\mathbf{x}(k))=1,
 \end{equation}
 where ${\mathcal{X}_{k}}$  is given by \eqref{eq:defXk2}, or  equivalently by \eqref{eqn:defXk}. In other words,  the support of the probability measure  $\Pr$ of the value of the state $\mathbf{x}(k)$ given the output observation $\mathbf{y}(k)$ and the system equations \eqref{eq:setmembersys} is nothing but ${\mathcal{X}_{k}}$.    This is in accordance with the set-membership formulation, which claims that  $\mathbf{x}(k)$ belongs to state uncertainty set  ${\mathcal{X}_{k}}$ defined in \eqref{eqn:defXk}.
 Then we can solve set-membership filtering by applying recursively  Theorems \ref{Th:prediction} and \ref{Th:updating}, as described in Algorithm 1.
 \begin{algorithm} \emph{Algorithm 1: prediction and updating}  \\
\begin{description}
  \item[A1.1] Initialize $\mathcal{P}_{\mathcal{X}_0}(\mathbf{X}(0))\equiv Co\left\{\delta_{\hat{\mathbf{x}}}:~\hat{\mathbf{x}}\in  \mathcal{X}_0\right\}$.
  \item[A1.2] For $k=1,\dots,T_o$:
  \begin{description}
  \item[A1.2.1]  $\hat{\mathcal{P}}_{\hat{\mathcal{X}}_k}(\mathbf{X}(k))\equiv Co\left\{\delta_{\hat{\mathbf{x}}}:~\hat{\mathbf{x}}\in  {\hat{\mathcal{X}}_k}\right\}$ with
  $\hat{\mathcal{X}}_k$ defined in \eqref{eq:predictionv};
    \item[A1.2.2]  $\mathcal{P}_{\mathcal{X}_k}(\mathbf{X}(k))\equiv Co\left\{\delta_{\hat{\mathbf{x}}}:~\hat{\mathbf{x}}\in  \mathcal{X}_k\right\}$ with
  $\mathcal{X}_k$ defined in \eqref{eq:defXk2}.
  \end{description}
\end{description}
\end{algorithm}
The steps A1.2.1 and A1.2.1 are  the prediction and the updating steps, respectively.
Note that the set of probability measures  $\mathcal{P}_{\mathcal{X}_k}(\mathbf{X}(k))$ (or 
$\hat{\mathcal{P}}_{\hat{\mathcal{X}}_k}(\mathbf{X}(k))$) is computed by taking into account all the observations  
$\mathbf{y}^{k}=\{\mathbf{y}(1),\mathbf{y}(2),\dots,\mathbf{y}(k)\}$ (respectively $\mathbf{y}^{k-1}$).
Hence, it should be more correctly denoted as $\mathcal{P}_{\mathcal{X}_k}(\mathbf{X}(k)|\mathbf{y}^{k})$ (respectively $\mathcal{P}_{\hat{\mathcal{X}}_k}(\mathbf{X}(k)|\mathbf{y}^{k-1})$). We have omitted this  notation for brevity.

\begin{remark}
 Under the assumptions \eqref{eq:setmembersysa1},\eqref{eq:setmembersysa2} and \eqref{eq:semi}, the set $\mathcal{X}_k$ is a semialgebraic set in $\mathbb{R}^{n}$,  described by the intersections of the semialgebraic sets $\hat{\mathcal{X}}_k$ (Eq. \eqref{eq:predictionv2}) and $\mathcal{Y}_{k}$ (Eq. \eqref{def:Yk}). Formally,  $\mathcal{X}_k$ is the projection in the space of $\mathbf{x}(k)$  of the set
 \begin{align} \label{def:Xkt}
 \tilde{\mathcal{X}}_k=\left\{\xaug \in \mathbb{R}^{2n}: h_s(\xaug (k)) \leq 0, \ s=1,\ldots,m\right\},
 \end{align}
  where $\xaug (k)$ is the augmented state vector $\xaug (k)=\left[\mathbf{x}^\top(k) \ \ \mathbf{x}^\top(k-1)\right]^\top$ and $h_s(\xaug (k))$ (with $s=1,\ldots,m$) are the polynomial functions in  $\mathbf{x}(k)$ and  $\mathbf{x}(k-1)$ (or equivalently in $\xaug (k)$) defining $\hat{\mathcal{X}}_k$  and $\mathcal{Y}_{k}$.
  In the rest of the paper, we will use the following   notation to describe the set $\mathcal{X}_k$:
 \begin{align} \label{def:Xk}
 \mathcal{X}_k=\left\{\mathbf{x}(k) \in \mathbb{R}^n: h_s(\xaug (k)) \leq 0, \ s=1,\ldots,m\right\}.
 \end{align}
 \end{remark}

 \begin{remark}
 The reformulation of set-membership in the probabilistic framework  is important for two main reasons.
First, it allows us to reinterpret  the operations performed in set-membership estimation and justifies them in terms
  of a probabilistic framework. We have just seen the reinterpretation of prediction and updating in terms of the Chapman-Kolmogorov
  equation and Bayes' rule. We will further investigate this interpretation in the next sections. In particular, in Section \ref{sec:computingmeans}, we will show that the convex membership set computed in set-membership estimation can also be interpreted   as the set of posterior means calculated \DP{with respect to}  the posterior set of probability measures $\mathcal{P}_{\mathcal{X}_k}(\mathbf{X}(k))$   (in the Bayesian setting, we know that the posterior mean is the optimal estimate \DP{with respect to}  a quadratic loss function -- a similar result holds  for the set of posterior means \cite[Sec.5]{benavoli2012g}). This result can also now be applied to set-membership estimation
  because, after this probabilistic interpretation, we are now able to compute expectations. Moreover, in Section \ref{sec:dual} we will also highlight the connection between set-membership
  estimation and the theory of moments (through duality).\\
  Second,  we are now potentially able to combine set-membership and classical probabilistic uncertainty in order to obtain hybrid filters, i.e., stochastic (probabilistic) filters that are for instance able to use information about the bounding region as well as the probabilistic moments (mean and variance) of the noises or that are able to deal with a Gaussian measurement noise and a bounded, with known moments, process noise etc.. A first attempt in this direction is described in \cite{benavoli2013b} for scalar systems. We plan to
  further investigate this direction in future work by using the theory of SOS polynomial optimization, that we also use in the next sections.
 \end{remark}

\section{Computing the support as an inference on the set of probability measures}
\label{sec:computingmeans}
 In the probabilistic formulation of filtering, all the available information at time $k$ is encoded
 in the  posterior probability distribution of the state $\mathbf{x}(k)$ given all the observations $\mathbf{y}^k)$.
 In the set-membership setting, this information is encoded in the updated set of probability measures $\mathcal{P}_{\mathcal{X}_k}(\mathbf{X}(k))$.
 Inferences can then be expressed in terms of expectations computed \DP{with respect to}  this set.
 The set-membership estimation problem  can, for instance, \DP{be reformulated} as follows:
\begin{equation}
\label{eq:prob0}
 \begin{array}{l}
\Omega^*=arg\min\limits_{\Omega \subseteq \mathbb{R}^n} \int\limits_{\Omega} d\mathbf{x}(k)  \\
s.t.\\
 \int\limits_{\Omega} dP(\mathbf{x}(k))= 1, \ \ \forall  P\in \mathcal{P}_{\mathcal{X}_k}(\mathbf{X}(k)).\\
\end{array}
 \end{equation}
 The solution of \eqref{eq:prob0} is the minimum-volume set  $\Omega \subseteq \mathbb{R}^n$, such that
$\Pr(\mathbf{x}(k) \in \Omega)=1$ for all probability measures $\Pr$ in $\mathcal{P}_{\mathcal{X}_k}(\mathbf{X}(k))$ (i.e., with support $\mathcal{X}_k$).\footnote{It is thus the union of all the supports of the probability measures in  $\mathcal{P}_{\mathcal{X}_k}(\mathbf{X}(k))$.} Thus, $\Omega^*$ coincides with $\mathcal{X}_k$.
Since $\mathcal{X}_k$ may be not convex, the problem (\ref{eq:prob0}) is in general difficult to solve.
However, the problem can be simplified  by restricting $\Omega$
to be convex, thus computing  a convex outer-approximation  of $\mathcal{X}_k$.

The following theorem shows that computing the minimum-volume convex set $\Omega$  such that $P(\mathbf{x}(k) \in \Omega)=1$ is
equivalent to obtain the set that includes all the possible means computed \DP{with respect to} the probability measure in $\mathcal{P}_{\mathcal{X}_k}(\mathbf{X}(k))$.
\begin{theorem} \label{Th:means}
Assume that $\mathcal{X}_k$ is compact and that $\Omega_1 \subseteq \mathbb{R}^n$ is a convex set defined as follows:
\begin{equation}
\label{eq:prob1}
 \begin{array}{l}
\Omega_1=arg\inf\limits_{\Omega \subseteq \mathbb{R}^n, \Omega~\text{conv. }} \int\limits_{\Omega} d\mathbf{x}(k) \\
s.t.\\
\int\limits_{\Omega} dP(\mathbf{x}(k))=1, \ \ \forall  P \in \mathcal{P}_{\mathcal{X}_k}(\mathbf{X}(k))
\end{array}
 \end{equation}
 Then, it results that $\Omega_1=\mathcal{M}$, with
 \begin{equation}
 \label{eq:prob2}
 \mathcal{M}=\left\{\int\limits_{\mathcal{X}_k}  \mathbf{x}(k) dP(\mathbf{x}(k))\!:\!~~P \in \mathcal{P}_{\mathcal{X}_k}(\mathbf{X}(k))\right\}.
 \end{equation}
 \end{theorem}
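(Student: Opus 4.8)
The plan is to identify both $\Omega_1$ and $\mathcal{M}$ with the convex hull $Co(\mathcal{X}_k)$ of the (compact) support set, and then conclude by set equality. Everything hinges on turning the probabilistic constraint in \eqref{eq:prob1} into a purely geometric one and on recognizing $\mathcal{M}$ in \eqref{eq:prob2} as the set of barycenters of all measures supported on $\mathcal{X}_k$.

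First I would translate the feasibility constraint of \eqref{eq:prob1} into a containment condition. Using the characterization \eqref{eq:probconstroptimequiv}, every Dirac measure $\delta_{\hat{\mathbf{x}}}$ with $\hat{\mathbf{x}} \in \mathcal{X}_k$ belongs to $\mathcal{P}_{\mathcal{X}_k}(\mathbf{X}(k))$. Imposing $\int_{\Omega} d\delta_{\hat{\mathbf{x}}} = 1$ forces $\hat{\mathbf{x}} \in \Omega$, so any feasible $\Omega$ must satisfy $\mathcal{X}_k \subseteq \Omega$. Conversely, if $\mathcal{X}_k \subseteq \Omega$ then $\int_{\Omega} dP \geq \int_{\mathcal{X}_k} dP = 1$ for every $P \in \mathcal{P}_{\mathcal{X}_k}(\mathbf{X}(k))$, so the constraint holds. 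Hence the feasible region of \eqref{eq:prob1} is exactly the family of convex sets containing $\mathcal{X}_k$. Since $Co(\mathcal{X}_k)$ is by definition the smallest convex set containing $\mathcal{X}_k$, every feasible $\Omega$ obeys $\Omega \supseteq Co(\mathcal{X}_k)$, and by monotonicity of Lebesgue volume under inclusion, $\int_{\Omega} d\mathbf{x}(k) \geq \int_{Co(\mathcal{X}_k)} d\mathbf{x}(k)$. As $Co(\mathcal{X}_k)$ is itself convex and contains $\mathcal{X}_k$, it is feasible and attains the infimum, giving $\Omega_1 = Co(\mathcal{X}_k)$. Compactness of $\mathcal{X}_k$ guarantees, via Carath\'{e}odory's theorem, that $Co(\mathcal{X}_k)$ is compact, hence closed and measurable, so the minimizer is well defined.

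Second I would prove $\mathcal{M} = Co(\mathcal{X}_k)$ by double inclusion. For $\mathcal{M} \subseteq Co(\mathcal{X}_k)$: the barycenter $\int_{\mathcal{X}_k} \mathbf{x}(k)\, dP(\mathbf{x}(k))$ of any probability measure supported on the compact set $\mathcal{X}_k$ lies in its closed convex hull, which coincides with $Co(\mathcal{X}_k)$ by the compactness just noted. For $Co(\mathcal{X}_k) \subseteq \mathcal{M}$: by Carath\'{e}odory every point of $Co(\mathcal{X}_k)$ is a finite convex combination $\sum_i \lambda_i \hat{\mathbf{x}}_i$ with $\hat{\mathbf{x}}_i \in \mathcal{X}_k$, and this point is exactly the mean of the atomic measure $\sum_i \lambda_i \delta_{\hat{\mathbf{x}}_i}$, which lies in $\mathcal{P}_{\mathcal{X}_k}(\mathbf{X}(k))$ by \eqref{eq:probconstroptimequiv}. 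Combining the three identifications yields $\Omega_1 = Co(\mathcal{X}_k) = \mathcal{M}$.

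The main obstacle I anticipate is not a hard estimate but the careful handling of the measure-theoretic and topological technicalities. Specifically: (i) justifying that the probabilistic constraint is equivalent to genuine pointwise containment $\mathcal{X}_k \subseteq \Omega$, and not merely containment up to measure zero — this is precisely where the availability of a Dirac measure at \emph{every} point of $\mathcal{X}_k$ (Proposition \ref{prop:1} together with \eqref{eq:probconstroptimequiv}) is indispensable; and (ii) ensuring that $Co(\mathcal{X}_k)$ is closed, so that $\mathcal{M}$ equals the convex hull itself rather than only its closure, which relies essentially on the compactness hypothesis on $\mathcal{X}_k$. Once these points are secured, the equality of the two variational characterizations follows from the fact that both reduce to the same object $Co(\mathcal{X}_k)$.
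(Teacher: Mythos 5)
Your proposal is correct and follows essentially the same route as the paper: both arguments identify $\Omega_1$ with the convex hull of $\mathcal{X}_k$ and then show that $\mathcal{M}$ coincides with that convex hull by realizing each point as the mean of an atomic measure in $\mathcal{P}_{\mathcal{X}_k}(\mathbf{X}(k))$. If anything, your version is slightly more careful than the paper's: you invoke Carath\'eodory's theorem to write points of the hull as convex combinations of finitely many (up to $n+1$) points of $\mathcal{X}_k$, whereas the paper uses only two points, and you explicitly justify via Dirac measures that feasibility in \eqref{eq:prob1} is equivalent to the pointwise containment $\mathcal{X}_k \subseteq \Omega$.
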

 \begin{proof}
  From (\ref{eq:prob1}) it follows  that $\Omega_1$ is the minimum volume
  convex set that includes $\mathcal{X}_k$.
  Thus, if $\mathcal{X}_k$ is convex, then $\Omega_1=\mathcal{X}_k$.
  Hence, from (\ref{eq:probconstroptimequiv}), the equality
  $$
 \int\limits_{\mathcal{X}_k}  \mathbf{x}(k) \delta_{\mathbf{\hat{x}(k)}}(d\mathbf{x}(k)) =\hat{\mathbf{x}}(k),
  $$
 and (\ref{eq:prob2}),  it   immediately follows that $\mathcal{M}=\Omega_1$.
  Conversely assume that $\mathcal{X}_k$ is not convex, then $\Omega_1 \supset \mathcal{X}_k$.
    Since $\Omega_1$ is the minimum volume   convex set that includes $\mathcal{X}_k$, then
    $\Omega_1$ must be equal to the convex-hull of $\mathcal{X}_k$.
  This means that for each $\hat{\mathbf{x}} \in \Omega_1$, there exist
  $\mathbf{z}_1,\mathbf{z}_2 \in \mathcal{X}_k$ such that $w\mathbf{z}_1+(1-w)\mathbf{z}_2=\hat{\mathbf{x}}$
  for some $w \in [0,1]$ (by definition of convex hull).
    Then, consider the probability measure
  \begin{equation}
  w\delta_{\mathbf{z}_1}+(1-w)\delta_{\mathbf{z}_2}.
  \end{equation}
  Because of \eqref{eq:probconstroptimequiv}, it holds:
    \begin{equation}
  w\delta_{\mathbf{z}_1}+(1-w)\delta_{\mathbf{z}_2} \in \mathcal{P}_{\mathcal{X}_k}(\mathbf{X}(k)),
  \end{equation}
  and
   \begin{equation}
 \int\limits_{\mathcal{X}_k}  \mathbf{x}(k) \left(w\delta_{\mathbf{z}_1}(d \mathbf{x}(k))+(1-w)\delta_{\mathbf{z}_2}(d \mathbf{x}(k))\right)=\hat{\mathbf{x}}.
  \end{equation}
  Thus, $\hat{\mathbf{x}}$ belongs to $\mathcal{M}$, and vice versa.
\hfill $\blacksquare$
  \end{proof}

Theorem \ref{Th:means} has the following fundamental implications:
\begin{itemize}
 \item a convex outer-bounding of the set of all the possible means computed \DP{with respect to} the probability measures in $\mathcal{P}_{\mathcal{X}_k}(\mathbf{X}(k))$ (i.e., the set $\mathcal{M}$) is also a convex outer-bounding of the support $\mathcal{X}_k$ of the set of probability measures $\mathcal{P}_{\mathcal{X}_k}(\mathbf{X}(k))$.
 \item the tightest convex outer-bounding of the support $\mathcal{X}_k$ of the set of probability distributions $\mathcal{P}_{\mathcal{X}_k}(\mathbf{X}(k))$ is the set of the  means computed \DP{with respect to} the probability measure in $\mathcal{P}_{\mathcal{X}_k}(\mathbf{X}(k))$.
 \end{itemize}
 We can thus use $\mathcal{M}$ as an outer-approximation of  $\mathcal{X}_k$. Algorithm 1 is therefore
 modified to include the following additional steps.

  \begin{algorithm} \emph{Refinement of Algorithm 1: outer-approximation step}  \\
\begin{description}
   \item[A1.1.3] Outer-approximate $\mathcal{X}_k$ with  $\mathcal{M}$ defined in \eqref{eq:prob2}.
   \item[A1.1.4]  Redefine $\mathcal{P}_{\mathcal{X}_k}(\mathbf{X}(k))\equiv Co\left\{\delta_{\hat{\mathbf{x}}}:~\hat{\mathbf{x}}\in  \mathcal{M}\right\}$.
  \end{description}
\end{algorithm}

Unfortunately, Theorem \ref{Th:means} does not provide a constructive way to find the set $\mathcal{M}$. However, by restricting the outer-approximation  of the support $\mathcal{X}_k$ to have a simple form (e.g., a polytope),
Theorem \ref{Th:means} can be still exploited   to determine an outer-bounding set of  $\mathcal{X}_k$. \DP{The following theorem provides results to compute an outer-bounding box of $\mathcal{X}_k$.}

\begin{theorem}[Box approximation]
\label{th:box}
 The minimum volume box that includes  $\mathcal{X}_k$ can be found by solving the following family of
optimization problems
\begin{equation}
\label{eq:box}
 \begin{array}{l}
\underline{\overline{x}}_i^*(k)=\underset{P}{\text{opt}} \int x_i(k) dP(\mathbf{x}(k))  \\
s.t.~  \int\limits_{\mathcal{X}_k} dP(\mathbf{x}(k))= 1.\\
\end{array}
 \end{equation}
 for $i=1,\dots,n$, where by selecting $\text{opt}$ to be $\text{min}$ or $\text{max}$ we obtain
 the half-spaces $\int x_i(k) dP(\mathbf{x}(k))\geq \underline{{x}}_i^*(k)$ and, respectively, $\int x_i(k) dP(\mathbf{x}(k))\leq {\overline{x}}_i^*(k)$ which define the box.
\end{theorem}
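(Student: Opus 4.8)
The plan is to reduce each problem in the family \eqref{eq:box} to a plain coordinate extremum over $\mathcal{X}_k$ by invoking Proposition \ref{prop:1}, and then to argue separately that these $2n$ extrema pin down the minimum-volume axis-aligned box. First I would note that every instance of \eqref{eq:box} is exactly an instance of the semi-infinite linear program \eqref{eq:probconstroptim}: the feasible set $\{P:\int_{\mathcal{X}_k}dP=1\}$ is precisely $\mathcal{P}_{\mathcal{X}_k}(\mathbf{X}(k))$, and the objective is the expectation of the linear function $g(\mathbf{x})=x_i$ (when $\text{opt}=\max$) or $g(\mathbf{x})=-x_i$ (when $\text{opt}=\min$, using the remark following \eqref{eq:probconstroptim} that the infimum is obtained by replacing the supremum). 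Since $\mathcal{X}_k$ is compact and $x_i$ is continuous, Proposition \ref{prop:1} applies directly: the optimum is attained at the atomic measure $\delta_{\hat{\mathbf{x}}}$ with $\hat{\mathbf{x}}=\arg\,\text{opt}_{\mathbf{x}\in\mathcal{X}_k}x_i$, so that
\[
\overline{x}_i^*(k)=\max_{\mathbf{x}\in\mathcal{X}_k}x_i,\qquad \underline{x}_i^*(k)=\min_{\mathbf{x}\in\mathcal{X}_k}x_i .
\]
Compactness guarantees these values are finite and attained. As a consistency check linking to Theorem \ref{Th:means}, the scalar $\int x_i(k)\,dP$ is the $i$-th coordinate of the mean $\int\mathbf{x}(k)\,dP\in\mathcal{M}$, so \eqref{eq:box} computes the range of the $i$-th coordinate over $\mathcal{M}=\mathrm{Co}(\mathcal{X}_k)$; because the coordinate range of a convex hull equals that of the underlying set, this agrees with the extrema over $\mathcal{X}_k$ itself.

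Next I would establish that the box $\mathcal{B}=\prod_{i=1}^n[\underline{x}_i^*(k),\overline{x}_i^*(k)]$ is the minimum-volume axis-aligned box containing $\mathcal{X}_k$. The structural fact to exploit is that an axis-aligned box contains $\mathcal{X}_k$ if and only if its $i$-th edge contains the projection $\{x_i:\mathbf{x}\in\mathcal{X}_k\}$ for every $i$, so the inclusion constraint decouples across coordinates. Since the volume is the product $\prod_i$ of the edge lengths and each edge can be shrunk independently without affecting the others, the volume is minimized by choosing every edge as short as possible while still covering the corresponding projection, namely $[\min_{\mathbf{x}\in\mathcal{X}_k}x_i,\max_{\mathbf{x}\in\mathcal{X}_k}x_i]$. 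Combining this with the previous step, the half-spaces $x_i(k)\geq\underline{x}_i^*(k)$ and $x_i(k)\leq\overline{x}_i^*(k)$ — which is how the constraints $\int x_i(k)\,dP\geq\underline{x}_i^*(k)$ and $\int x_i(k)\,dP\leq\overline{x}_i^*(k)$ read in state space once restricted to the atomic optimizers — cut out exactly the edges of $\mathcal{B}$, and their intersection is the claimed minimum-volume box.

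I expect the genuinely delicate point to be this second step rather than the optimization itself: the reduction to coordinate extrema is an immediate corollary of Proposition \ref{prop:1}, whereas the \emph{minimum-volume} claim requires the decoupling-plus-product argument to be made carefully (in particular, that no cross-coordinate trade-off can lower the volume, and that the extremal intervals cannot be shortened without losing a point of $\mathcal{X}_k$). Everything else is routine once the compactness of $\mathcal{X}_k$ is invoked to ensure the extrema exist and the box has finite, well-defined volume.
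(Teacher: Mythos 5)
Your proof is correct, and it arrives at the same $2n$ coordinate extrema $\min_{\mathbf{x}\in\mathcal{X}_k}x_i$ and $\max_{\mathbf{x}\in\mathcal{X}_k}x_i$ that the paper intends, but it uses a different key lemma and proves strictly more. The paper gives Theorem \ref{th:box} no standalone proof: it treats \eqref{eq:box} as the special case $\boldsymbol{\omega}=\pm\mathbf{e}_i$ of the tightest-half-space problem \eqref{eqn:proofProp2}, and the proof of Theorem \ref{Th:fstar} only establishes that any $\nu\geq\nu^*$ yields a half-space containing $\mathcal{M}$ (hence $\mathcal{X}_k$, via Theorem \ref{Th:means}) and that $\nu=\nu^*$ is the tightest choice for a fixed normal; the claim that intersecting the $2n$ tightest coordinate half-spaces actually \emph{minimizes the volume} over all axis-aligned boxes containing $\mathcal{X}_k$ is left implicit. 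You instead invoke Proposition \ref{prop:1} to identify each optimal value of \eqref{eq:box} with a coordinate extremum attained by an atomic measure (applying it to $g=-x_i$ for the $\min$ instances, with compactness guaranteeing attainment), and then you supply exactly the step the paper glosses over: containment of an axis-aligned box decouples coordinatewise into containment of the projections, the volume is the product of the edge lengths, and each factor is minimized independently by the tightest interval, so no cross-coordinate trade-off can help. The two routes are mathematically close --- Proposition \ref{prop:1} is what makes $\nu^*$ in Theorem \ref{Th:fstar} equal to $\sup_{\mathbf{x}\in\mathcal{X}}\boldsymbol{\omega}^\top\mathbf{x}$ anyway --- but yours is the more complete argument for the ``minimum volume'' claim as literally stated, at the (negligible) cost of being specific to coordinate-aligned normals rather than flowing from the general half-space result the paper reuses later.
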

The proof of     Theorem \ref{th:box} is provided together with the proof of Theorem \ref{Th:fstar}.
Based on Theorem \ref{th:box}, by computing the lower and upper means of the components $x_1(k),\ldots,x_n(k)$ of the vector $\mathbf{x}(k)$,   the tightest box that outer-approximates $\mathcal{X}_k$ is obtained.
In the following we will discuss how to efficiently solve optimization problems similar to  (\ref{eq:box}) and
how to find an outer-approximation of $\mathcal{X}_k$ that is less conservative than a box.
 For simplicity of notation, in the rest of the paper, the dependence of the state $\mathbf{x}(k)$ and of the set $\mathcal{X}_k$  on the time index $k$ will be dropped, and only used when necessary.

 \section{Exploiting duality}
 \label{sec:dual}
 In this section we discuss how to efficiently solve optimization problems similar to  (\ref{eq:box}).
 In particular, we slightly modify (\ref{eq:box}) in order to be able to determine the more general half-space
  \begin{equation}
\label{eq:hyperspace}
\mathcal{H}=\left\{ \xhyp \in \mathbb{R}^n:   \boldsymbol{\omega}^\top  \xhyp \leq \nu \right\},
 \end{equation}
  where $\boldsymbol{\omega}\in \mathbb{R}^n$, $\nu \in \mathbb{R}$ and $\xhyp= \int    \mathbf{x}  dP(\mathbf{x})$.\footnote{The half-space $\mathcal{H}$ lies on the space of the means.}

\begin{theorem} \label{Th:fstar}
  Let us fix the normal vector $\boldsymbol{\omega}$ defining the half-space $\mathcal{H}$  in \eqref{eq:hyperspace}. Then,   the tightest half-space $\mathcal{H}$ including $\mathcal{M}$ (or equivalently, including $\mathcal{X}$),  is obtained for $\nu= \nu^*$, with
  \begin{equation}
\label{eqn:proofProp2}
 \begin{array}{l}
\nu^*=\max\limits_{P} \int \boldsymbol{\omega}^\top\mathbf{x} dP(\mathbf{x})  \\
s.t.~  \int\limits_{\mathcal{X}} dP(\mathbf{x})= 1.\\
\end{array}
 \end{equation}
\end{theorem}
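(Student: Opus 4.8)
The plan is to read \eqref{eq:hyperspace} as prescribing a half-space through its \emph{support function} in the fixed direction $\boldsymbol{\omega}$. For a fixed normal, $\mathcal{H}$ contains $\mathcal{M}$ exactly when $\boldsymbol{\omega}^\top \xhyp \le \nu$ holds for every mean $\xhyp \in \mathcal{M}$, so the least conservative (smallest) half-space of this form is obtained by taking $\nu$ as small as possible, namely $\nu^\ast = \sup_{\xhyp \in \mathcal{M}} \boldsymbol{\omega}^\top \xhyp$. The first step is therefore simply to record this elementary characterization of the tightest half-space with a prescribed normal.

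The second step converts the support function of $\mathcal{M}$ into the moment problem \eqref{eqn:proofProp2}. Each $\xhyp \in \mathcal{M}$ has the form $\xhyp = \int \mathbf{x}\, dP(\mathbf{x})$ for some $P \in \mathcal{P}_{\mathcal{X}}(\mathbf{X})$; since $\boldsymbol{\omega}$ is a fixed vector, linearity of the integral gives $\boldsymbol{\omega}^\top \xhyp = \int \boldsymbol{\omega}^\top \mathbf{x}\, dP(\mathbf{x})$. Taking the supremum over all $P \in \mathcal{P}_{\mathcal{X}}(\mathbf{X})$ reproduces exactly the objective of \eqref{eqn:proofProp2} with $g(\mathbf{x}) = \boldsymbol{\omega}^\top \mathbf{x}$. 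Invoking Proposition \ref{prop:1} with this linear $g$ shows the supremum is attained at the atomic measure $\delta_{\hat{\mathbf{x}}}$, $\hat{\mathbf{x}} = \arg\sup_{\mathbf{x}\in\mathcal{X}} \boldsymbol{\omega}^\top \mathbf{x}$; the value is finite and the maximizer exists because $\mathcal{X}$ is compact and $\mathbf{x}\mapsto\boldsymbol{\omega}^\top\mathbf{x}$ is continuous. Hence ``$\sup$'' may be written ``$\max$'' and $\nu^\ast = \max_P \int \boldsymbol{\omega}^\top \mathbf{x}\, dP(\mathbf{x})$, as stated.

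The third step justifies the parenthetical equivalence ``including $\mathcal{M}$ (equivalently, including $\mathcal{X}$)''. A half-space is convex, so it contains $\mathcal{X}$ if and only if it contains the convex hull of $\mathcal{X}$; by Theorem \ref{Th:means} that convex hull is precisely $\mathcal{M}$. Consequently the tightest half-space of the form \eqref{eq:hyperspace} bounding $\mathcal{M}$ and the one bounding $\mathcal{X}$ coincide, and $\nu^\ast$ computes both.

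Finally, Theorem \ref{th:box} is the coordinate specialization: choosing $\boldsymbol{\omega} = e_i$ yields the upper mean $\overline{x}_i^\ast(k)$ and $\boldsymbol{\omega} = -e_i$ yields the lower mean $\underline{x}_i^\ast(k)$, and intersecting the resulting $2n$ half-spaces produces the minimum-volume box. I do not expect a serious obstacle here: the argument is essentially a support-function computation glued to Proposition \ref{prop:1} and the convex-hull identity of Theorem \ref{Th:means}. The only point requiring care --- the mild ``hard part'' --- is ensuring finiteness and attainment of the supremum, which is exactly where compactness of $\mathcal{X}$ enters.
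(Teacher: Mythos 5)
Your proof is correct and follows essentially the same route as the paper: identify $\nu^*$ as the support function of $\mathcal{M}$ in direction $\boldsymbol{\omega}$, rewrite it via linearity of the integral as the moment problem \eqref{eqn:proofProp2}, and conclude tightness. You are in fact somewhat more complete than the paper's own argument, which only verifies that $\nu\geq\nu^*$ implies $\mathcal{M}\subseteq\mathcal{H}$ and leaves the attainment of the supremum (your appeal to Proposition~\ref{prop:1} and compactness) and the equivalence with bounding $\mathcal{X}$ (your appeal to Theorem~\ref{Th:means}) implicit.
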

\begin{proof}
Let $\xhyp = \int    \mathbf{x}  dP(\mathbf{x})$ be a point belonging to $\mathcal{M}$. Let us first \DP{prove} that if $\nu\geq \nu^*$, then $\mathcal{M} \subseteq \mathcal{H}$.
 First, note that:
\begin{align}
\nonumber
\boldsymbol{\omega}^\top \xhyp  \leq \nu^*= & \sup_P  \boldsymbol{\omega}^\top  \int    \mathbf{x}  dP(\mathbf{x})\\
\nonumber
& s.t. \ \int_{\mathcal{X}}dP(\mathbf{x})=1
\end{align}
Therefore, for $\nu\geq \nu^*$,  $\boldsymbol{\omega}^\top  \xhyp \leq \nu^* \leq \nu$, which means that $\xhyp= \int    \mathbf{x}  dP(\mathbf{x})$ also belongs to $\mathcal{H}$ for all $\xhyp \in \mathcal{M}$. Thus,  $\mathcal{H}$ contains  $\mathcal{M}$.
By choosing $\nu=\nu^*$, we obtain the tightest half-space defined by the normal vector $\boldsymbol{\omega}$ that includes $\mathcal{M}$.
\hfill $\blacksquare$
\end{proof}


It can be observed that (\ref{eqn:proofProp2}) reduces to (\ref{eq:box}) when   $\boldsymbol{\omega}=\mathbf{e}_i$ for $i=1,\dots,n$,
 where $\mathbf{e}_i$ is an element of the natural basis of $\mathbb{R}^n$.
Note that, in Problem \eqref{eqn:proofProp2}: (i) the optimization
variables are the amount of non-negative mass assigned to
each point $\mathbf{x}$ in $\mathcal{X}$ (i.e., the measure $Pr(\mathbf{x})$); (ii) the objective function
and the constraint are linear in the optimization variables. Therefore, \eqref{eqn:proofProp2} is a semi-infinite linear
program (i.e., infinite number of decision variables but finite number of constraints).
By exploiting duality of semi-infinite linear program (see for instance \cite{2001Ala}), we can write the dual of \eqref{eqn:proofProp2}, which is defined as:
\begin{align}
\nonumber
\nu^*=& \inf_{\nu}  \nu\\
\label{eqn:proofProp222}
& s.t. \  \nu \geq \boldsymbol{\omega}^{\top}   \mathbf{x}, \ \ \forall  \mathbf{x} \in \mathcal{X},
\end{align}
which is also a  semi-infinite linear
program (i.e., finite number of decision variables ($\nu$) but infinite number of constraints).
A solution $\nu$ is feasible for Problem  \eqref{eqn:proofProp222} provided that:
$$
 \nu - \boldsymbol{\omega}^{\top}  \mathbf{x} \geq0, ~~\forall  \mathbf{x} \in \mathcal{X}.
$$
Hence, checking the feasibility of $\nu$  is equivalent to check the non-negativity of the polynomial  $\nu - \boldsymbol{\omega}^{\top}  \mathbf{x}$ in the set $\mathcal{X}$. 
\begin{remark}
The probabilistic formulation of the set-membership estimation described so far is general enough, and it is valid also when the dynamical system in \eqref{eq:setmembersys} is not a polynomial system and when the   uncertainty sets $\mathcal{X}_0,\mathcal{W}_k,\mathcal{V}_k$ in \eqref{eq:sm} are not semialgebraic, but just compact sets. The assumptions of polynomiality  are used in the following to efficiently solve the semi-infinite linear programming problem \eqref{eqn:proofProp222} through convex optimization.
\end{remark}

\subsection{Sum-of-squares polynomials} \label{subsec:SOS}
A sufficient condition for a polynomial to be non-negative over a semialgebraic set
is that it can be written in terms of \emph{sum-of-squares} (SOS) polynomials (see, e.g., \cite{2003Apa}).
\begin{definition}
 A polynomial $\sigma(\xaug)$, with  $\xaug \in \mathbb{R}^{2n}$, of degree $2\relorder$ is a sum-of-squares polynomial, denoted by $\sigma(\xaug) \in \Sigma [\xaug]$, if and only if it can be written as:
   \begin{equation} \label{eqn:SOSdef}
  \sigma(\xaug)=\mathbf{q}_\relorder(\xaug)^\top \mathbf{Q} \mathbf{q}_\relorder(\xaug), 
 \end{equation}
 where $\mathbf{Q}$  is  a real symmetric positive semidefinite matrix of dimension ${2n+\relorder \choose \relorder}$. The vector of monomials $\mathbf{q}_\relorder(\xaug)$ is defined as in \eqref{eq:moments}. The set of SOS polynomials of degree less then or equal to $2\relorder$ is denoted as $\Sigma_{2\relorder}[\xaug]$.
%
\end{definition}
Then, for a given integer $\relorder \geq 1$, a sufficient condition for $\nu-\boldsymbol{\omega}^\top   \mathbf{x}$ to be non-negative in $\mathcal{X}$ is (see for instance \cite[Ch. 4]{lasserre2010moments}):
  \begin{equation} \label{opt:algappSDPv3}
   \begin{array}{l}
\nu - \boldsymbol{\omega}^\top   \mathbf{x}   =\sigma_0(\xaug)-\sum\limits_{s=1}^m \sigma_s(\xaug) h_s(\xaug)~~\forall~\xaug \in \mathbb{R}^{2n}\\
   \sigma_0(\xaug), \sigma_1(\xaug), \ldots, \sigma_m(\xaug) \in \Sigma_{2\relorder}[\xaug], 
   \end{array}
    \end{equation}
  where $h_s(\xaug)$ (with $s=1,\ldots,m$) are the polynomial nonpositive inequality constraints defining the semialgebraic set $\mathcal{X}$. In order to avoid confusion, we would like to stress that also $\nu - \boldsymbol{\omega}^\top  \mathbf{x}$ is a polynomial in the variable $\xaug$. \DP{In fact,} we remind that the augmented state  $\xaug(k)$ is defined as: $\xaug(k)=\left[\mathbf{x}^\top(k) \ \ \mathbf{x}^\top(k-1)\right]^\top$.

The following (more conservative) optimization problem can be then solved instead of  \eqref{eqn:proofProp222}:
  \begin{equation} \label{opt:algappSDPv4}
  \begin{split}
\nu^{**}&= \inf_{\nu,\sigma_s}  \nu\\
   & \nu - \boldsymbol{\omega}^\top  \mathbf{x}   = \sigma_0(\xaug)-\sum_{s=1}^m \sigma_s(\xaug) h_s(\xaug), ~~\forall~\xaug \in \mathbb{R}^{2n}\\
  & \sigma_0(\xaug), \sigma_1(\xaug), \ldots, \sigma_m(\xaug) \in \Sigma_{2\relorder}[\xaug].
  \end{split}
  \end{equation}
Note that, by rewriting the SOS polynomials $\sigma_s(\xaug)$ (with $s=0,\ldots,m$) as in \eqref{eqn:SOSdef}, Problem \eqref{opt:algappSDPv4} can be also rewritten as:
  \begin{equation} \label{opt:algappSDP}
  \begin{split}
\nu^{**}=& \inf_{\nu,\mathbf{Q}_s}  \nu\\
    \nu - \boldsymbol{\omega}^\top  \mathbf{x}   = & \mathbf{q}_\relorder(\xaug)^\top \mathbf{Q}_0 \mathbf{q}_\relorder(\xaug)+\\
   -& \sum_{s=1}^m \mathbf{q}_\relorder(\xaug)^\top \mathbf{Q}_s \mathbf{q}_\relorder(\xaug) h_s(\xaug), ~~\forall~\xaug \in \mathbb{R}^{2n}\\
  &\mathbf{Q}_s \succeq 0, \ \ s=0,\ldots,m.
  \end{split}
  \end{equation}

Some remarks:
\begin{enumerate}
 \item Problem \eqref{opt:algappSDP} is a \emph{semidefinite programming} (SDP) problem \cite{2003Apa,2003AChGaTe}, thus convex. In fact, checking if  the polynomial $\nu-\boldsymbol{\omega}^T\mathbf{x}$ is equal to $\mathbf{q}_\relorder(\xaug)^\top \mathbf{Q}_0 \mathbf{q}_\relorder(\xaug)-\sum_{s=1}^m \mathbf{q}_\relorder(\xaug)^\top \mathbf{Q}_s \mathbf{q}_\relorder(\xaug) h_s(\xaug)$ for all $\xaug \in \mathbb{R}^{2n}$ leads to linear equalities in  $\nu$ and in the matrix coefficients $\mathbf{Q}_s$ (with $s=1,\ldots,m$).  Besides, enforcing  $\sigma_0(\xaug), \sigma_1(\xaug), \ldots, \sigma_m(\xaug)$ to be sum of square polynomials leads to \emph{linear matrix inequality} (LMI) constraints in the coefficients of $\sigma_0(\xaug), \sigma_1(\xaug), \ldots, \sigma_m(\xaug)$ (i.e., $\mathbf{Q}_s \succeq 0$).  
     \item For $\nu=\nu^{**}$,   the robust constraint $  \nu^{**} - \boldsymbol{\omega}^{\top}   \mathbf{x} \geq 0 \ \ \forall  \mathbf{x} \in \mathcal{X}$ appearing in Problem \eqref{eqn:proofProp222} is guaranteed to be satisfied. As matter of fact, for all $\xaug \in \tilde{\mathcal{X}}$, $h_s(\xaug) \leq 0$ (with $s=1,\ldots,m$) by definition of $\tilde{\mathcal{X}}$. Furthermore,  the SOS polynomials  $\sigma_s(\xaug)=\mathbf{q}_\relorder(\xaug)^\top \mathbf{Q}_s \mathbf{q}_\relorder(\xaug)$ (with $s=0,\ldots,m$) are always nonnegative over $\mathbb{R}^{2n}$ as $\mathbf{Q}_s\succeq 0$. Thus, both the left and the right side of the equation in Problem \eqref{opt:algappSDP} are nonnegative  for all $\xaug \in \tilde{\mathcal{X}}$.
  \item Since the equality constraint in \eqref{opt:algappSDP} gives only a sufficient condition for the non-negativity of $\nu - \boldsymbol{\omega}^\top   \mathbf{x}$
  on $\mathcal{X}$, it follows that   $\nu^{*}\leq \nu^{**}$. Therefore,   conservativeness is introduced in solving \eqref{opt:algappSDP} instead of \eqref{eqn:proofProp222}, as highlighted in Corollary \ref{co:1}.
  \item However, according to the \emph{Putinar's Positivstellensatz} \DP{(see, e.g., \cite{Putinar93} and \cite[Ch. 3]{2009Ala}),} a  polynomial  which is nonnegative over a compact semialgebraic set $\mathcal{X}$ can exactly always be written as a combination of SOS polynomials, provided that the degree of the SOS polynomials  $\sigma_0(\xaug),\ldots,\sigma_m(\xaug)$ is large enough.
  In other words, we can make $\nu^{**}$ close to $\nu^{*}$ by increasing the degree of the SOS.
However, in practice it often happens that the relaxed solution $\nu^{**}$ and the optimal one $\nu^{*}$  coincide with each other for small values of the SOS degree $2\relorder$. 
\end{enumerate}

\begin{corollary}
\label{co:1}
The set $\mathcal{M}$ is guaranteed to belong to the halfspace $\mathcal{H}:\boldsymbol{\omega}^\top \mathbf{x} \leq \nu^{**}$, i.e.
 \begin{equation}
 \mathcal{M} \subseteq \mathcal{H}.\\
 \end{equation}
\end{corollary}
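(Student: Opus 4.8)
The plan is to deduce the corollary directly from two facts already in hand: the half-space characterization of Theorem \ref{Th:fstar} and the relaxation inequality $\nu^* \leq \nu^{**}$ established in the remarks following Problem \eqref{opt:algappSDP}. Since both ingredients are already proved, the argument is short and is essentially a matter of chaining them in the correct order.

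First I would recall the content of the proof of Theorem \ref{Th:fstar}: for any $\nu \geq \nu^*$ the half-space $\mathcal{H} = \{\xhyp \in \mathbb{R}^n : \boldsymbol{\omega}^\top \xhyp \leq \nu\}$ contains $\mathcal{M}$. Indeed, every element of $\mathcal{M}$ is of the form $\xhyp = \int \mathbf{x}\, dP(\mathbf{x})$ for some $P \in \mathcal{P}_{\mathcal{X}}(\mathbf{X})$, and by the definition of $\nu^*$ as the supremum of $\int \boldsymbol{\omega}^\top \mathbf{x}\, dP(\mathbf{x})$ over all such $P$, one has $\boldsymbol{\omega}^\top \xhyp \leq \nu^* \leq \nu$.

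Second I would invoke the inequality $\nu^* \leq \nu^{**}$. This holds because the SOS equality constraint in \eqref{opt:algappSDPv4} is only a sufficient condition for the non-negativity of the polynomial $\nu - \boldsymbol{\omega}^\top \mathbf{x}$ on $\mathcal{X}$; hence the feasible set of the SOS program is contained in the feasible set of the dual program \eqref{eqn:proofProp222}, and minimizing $\nu$ over the smaller set can only raise (or leave unchanged) the optimal value. Consequently $\nu^{**} \geq \nu^*$.

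Combining the two observations, I would apply the conclusion of the first step with the particular choice $\nu = \nu^{**} \geq \nu^*$, which immediately gives $\mathcal{M} \subseteq \{\boldsymbol{\omega}^\top \mathbf{x} \leq \nu^{**}\} = \mathcal{H}$. I do not expect any genuine obstacle, since the corollary is a consequence of results already proved; the only point worth making explicit is that $\nu^{**}$, although computed through the conservative SOS relaxation, remains a valid upper bound for $\nu^*$ and therefore still yields a half-space that robustly outer-bounds $\mathcal{M}$ (and hence, via Theorem \ref{Th:means}, the support $\mathcal{X}$).
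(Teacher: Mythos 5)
Your proposal is correct and follows exactly the same route as the paper, which proves the corollary by combining Theorem \ref{Th:fstar} (any $\nu\geq\nu^*$ yields a half-space containing $\mathcal{M}$) with the relaxation inequality $\nu^*\leq\nu^{**}$. You merely spell out in more detail the two ingredients that the paper cites in a single line.
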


\begin{proof}
The proof straightforwardly follows from  Theorem \ref{Th:fstar} and $\nu^* \leq  \nu^{**}$. \hfill $\blacksquare$
\end{proof}

\begin{example}
 \label{ex:1}
Let us consider the discrete-time polynomial system described by the difference equations:
\begin{equation}
\label{eq:ex1sys}
\begin{array}[pos]{rcl}
   x_1(k)&\!\!=\!\!& x_1\!(k\!-\!1)x_2(k\!-\!1)\!(x_1\!(k\!-\!1)+x_2\!(k\!-\!1)) \!+\!w_1(k\!-\!1),\\
   x_2(k)&\!\!=\!\!& x_1\!(k\!-\!1)x_2(k\!-\!1)\!(2x_1\!(k\!-\!1)+x_2\!(k\!-\!1))\! +\! w_2(k\!-\!1).\\
\end{array}
\end{equation}
The output equation is given by:
$
\mathbf{y}(k)=x_1(k)+x_2(k)+\mathbf{v}(k)$.
The following conditions are assumed: (i) the initial state $\mathbf{x}(0)$ belongs to $\mathcal{X}_0=\left\{\mathbf{x}(0):~\|\mathbf{x}(0)\|_{2} \leq  0.2\right\}$, the process noise  $\mathbf{w}(k)=[w_1(k) \ \ w_2(k)]^\top$ is bounded by $\|\mathbf{w}(k)\|_{2} \leq 0.4$, and
the measurement noise by $\|\mathbf{v}(k)\|_{\infty} \leq 0.5$. The observed output $\mathbf{y}(k)$ at time $k=1$ is $\mathbf{y}(k)=0$.
 We are interested in computing an half-space $\mathcal{H}:\boldsymbol{\omega}^\top \xhyp \leq \nu$ containing the state uncertainty set $\mathcal{X}_k$ (or equivalently $\mathcal{M}$) at time $k=1$. The normal vector $\boldsymbol{\omega}$ characterizing $\mathcal{H}$ is fixed and it is equal to $\boldsymbol{\omega}=[-1 \ \ -0.5]^\top$.  In order to compute the constant parameter $\nu$ defining $\mathcal{H}$, the SDP   Problem \eqref{opt:algappSDP}
 with $\xaug(1)=[\mathbf{x}^T(1) \ \ \mathbf{x}^T(0)]^T$ and
 \begin{align}
 \label{eq:defhs}
 h_1(\xaug(1))\!:&  x_1(0)^2 + x_2(0)^2 - 0.2^2 \leq 0,\\
 h_2(\xaug(1))\!:&  \underset{w^2_1(0)}{\underbrace{\left(x_1(1)\!-\! x_1\!(0)x_2(0)\!(x_1\!(0)+x_2\!(0))\right)^2}}\!+  \nonumber \\
                 &  \underset{w^2_2(0)}{\underbrace{\left(x_1(1)\!-\! x_1\!(0)x_2(0)\!(2x_1\!(0)+x_2\!(0))\right)^2}} -0.4^2 \!\leq\! 0,  \nonumber\\
 h_3(\xaug(1))\!:&  \underset{\mathbf{v}(1)}{\underbrace{y(1)-x_1(1)-x_2(1)}}-0.5 \leq 0,  \nonumber \\
 h_4(\xaug(1))\!:&  -\underset{\mathbf{v}(1)}{\left(\underbrace{y(1)-x_1(1)-x_2(1)}\right)}-0.5 \leq 0,  \nonumber \\
   \end{align}
    is solved  for a SOS degree $2\relorder=4$. The \emph{SOStools} \cite{sostools} has been used to easily handle the SOS polynomials appearing in \eqref{opt:algappSDP}. The   CPU time taken by the solver \emph{SeDuMi} \cite{1999Ast} to compute a solution of the SDP   Problem \eqref{opt:algappSDP} on a 2.40-GHz Intel Pentium IV  with 3 GB of RAM is 2.1 seconds.  The computed half-space  $\mathcal{H}$ is plotted in Fig. \ref{fig:trueX}, along with the   true  state uncertainty set $\mathcal{X}_1$. According to Theorem \ref{Th:fstar} and
Corollary \ref{co:1}, $\mathcal{X}_1$ is included in the half-space $\mathcal{H}$. Note also that, although the original robust optimization problem \eqref{eqn:proofProp222} has been replaced with the SDP problem \eqref{opt:algappSDP}, the computed parameter $\nu^{**}$ defining $\mathcal{H}$ is such that the hyperplane $\boldsymbol{\omega}^\top  \mathbf{x} = \nu^{**}$ is ``almost'' tangent to the set $\mathcal{X}_1$. Thus, only a small level of conservativeness is introduced in using SOS.

\begin{figure}
\includegraphics[scale=0.5]{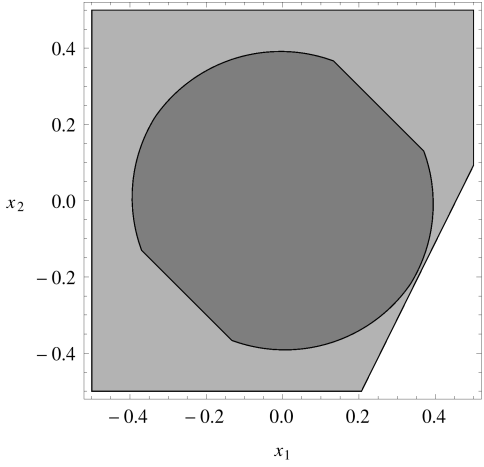}
 \caption{True state uncertainty set $\mathcal{X}_1$ (dark grey region) and half-space $\mathcal{H}: -\xhyp_1 -
  0.5 \xhyp_2 \leq  0.45$ (light gray region).} \label{fig:trueX}
\end{figure}

\end{example}

\section{Computation of the minimum-volume polytope containing $\mathcal{M}$}
In the previous section, given the normal vector  $\boldsymbol{\omega}$ defining
the half-space $\mathcal{H}$ in \eqref{eq:hyperspace}, we have shown how to compute, through convex optimization, the constant parameter $\nu$ such that $\mathcal{M} \subset \mathcal{H}$.

Now consider the following family of  half-spaces:
$$
\mathcal{H}_j=\left\{ \xhyp \in \mathbb{R}^n:   \boldsymbol{\omega}_j^\top \xhyp \leq \nu_j \right\},
 $$
for $j=1,\dots,J$ with $J\geq n+1$.
Our goal is to choose the normal vectors $\boldsymbol{\omega}_j$, along with the constant parameters $\nu_j$,  defining the half-spaces $\mathcal{H}_j$ such that
\begin{enumerate}
 \item $ \mathcal{M} \subseteq \mathcal{S}=\bigcap_{j=1}^J \mathcal{H}_j$;
 \item the polytope $\mathcal{S}$ has minimum volume.
\end{enumerate}
In other words, now also the normal vectors $\boldsymbol{\omega}_j$ for $j=1,\dots,J$  have to be optimized.
Then, we can formulate the problem we aim to solve as:
\begin{equation} \label{eqn:problinhull}
\begin{split}
 &\inf_{\mathcal{S}}\int_\mathcal{S} d\mathbf{x} ~~ \mbox{s.t. }  \mathcal{M} \subseteq \mathcal{S},
\end{split}
\end{equation}
where $\mathcal{S}$ in \eqref{eqn:problinhull} is constrained to be a polytope. 
There are two  main aspects making \eqref{eqn:problinhull} a challenging problem, i.e.,
\begin{enumerate}
  \item the  minimum-volume polytope outer-approximating  a generic compact set in $\mathbb{R}^n$ might not exist. For instance, if $\mathcal{M}$ is an ellipsoid, its  convex hull is described by an infinite number of half-spaces, namely all the supporting hyperplanes at every boundary point of  $\mathcal{M}$.
       \item the problem of computing the exact volume $\int_\mathcal{S} d\mathbf{x}$ of a polytope $\mathcal{S}$ in $\mathbb{R}^n$ is $\#P$-hard (see, e.g. \cite{dyer1988complexity,bueler2000exact}. \DP{The interested reader is also referred to \cite{arora2009computational} for  details on $\#P$-hard problems).}   Although several algorithms have been proposed in the literature to compute the volume of a polytope  $\mathcal{S}$ through triangulation  \cite{cohen1979two,allgower1986computing,lasserre1983analytical,bemporad2004inner}, Gram's relation \cite{lawrence1991polytope}, Laplace transform \cite{lasserre2001laplace} or randomized methods \cite{smith1984efficient,dyer1991random,wiback2004monte}, all the approaches mentioned above require an exact description of the polytope $\mathcal{S}$ in terms of its half-space or vertex representation. However, in our case, the parameters $\boldsymbol{\omega}_j,\nu_j$ defining the half-spaces $\mathcal{H}_j$ are unknown, as determining $\boldsymbol{\omega}_j,\nu_j$ is part of  the problem itself.
\end{enumerate}
 In the following paragraph we present a greedy algorithm   to evaluate an approximation of the minimum-volume polytope   outer-approximating the set $\mathcal{M}$. 
%


\subsection{Approximation of the objective function} \label{susec:area}
As already pointed out in the previous paragraph, one of the main problems in solving \eqref{eqn:problinhull} is that an analytical expression for the computation of  the volume of a polytope $\mathcal{S}$ in $\mathbb{R}^n$ is not available   and the polytope $\mathcal{S}$ is unknown, as  computing $\mathcal{S}$ is part of  the problem itself.
In order to overcome such a problem, a Monte Carlo integration approach \cite{robert2004monte} is used here to approximate the volume of   $\mathcal{S}$. Specifically, given an outer-bounding box $\mathcal{B}$ of the set $\mathcal{M}$ (which can be computed as discussed in Theorem \ref{th:box}) and a sequence of $N$ random points $\{p_i\}_{i=1}^N$ \DP{independent and} uniformly distributed in $\mathcal{B}$, the integral  $\int_\mathcal{S} d\mathbf{x}$ can be approximated as:
\begin{equation} \label{eqn:app}
\int_\mathcal{S} d\mathbf{x} \approx Vol(\mathcal{B})\frac{1}{N}\sum_{i=1}^N I_{\{\mathcal{S}\}}(p_i),
\end{equation}
where $Vol(\mathcal{B})$ is the volume of the box  $\mathcal{B}$ and $I_{\{\mathcal{S}\}}(p_i)$ is the indicator function of the (unknown) polytope $\mathcal{S}$ defined as
\begin{equation}
I_{\{\mathcal{S}\}}(p_i) = \left\{\begin{array}{lll}
                                    1 & \mbox{ if }  p_i \in  \mathcal{S}\\
                                    0 & \mbox{ otherwise }
                                  \end{array}
\right.
\end{equation}

\begin{remark}
It is worth remarking that:
$$\mathbb{E}\left[Vol(\mathcal{B})\frac{1}{N}\sum_{i=1}^N I_{\{\mathcal{S}\}}(p_i)\right]=Vol(\mathcal{S}),$$
\DP{where the expectation is taken \DP{with respect to} the random variable $p_i$.
 Furthermore,} because of the strong law of large numbers,
\begin{equation}
\lim_{N \rightarrow \infty}Vol(\mathcal{B})\frac{1}{N}\sum_{i=1}^N I_{\{\mathcal{S}\}}(p_i)=Vol(\mathcal{S}) \ \ \mathrm{w.p.}\  1,
\end{equation}
\DP{where $\mathrm{\emph{w.p.}}\ 1$ is for \emph{with probability} $1$.} {For finite samples $N$, the level of  accuracy 
of the approximation in \eqref{eqn:app} depends on the shape of the set $\mathcal{S}$ as well as on the volume of the 
outer box $\mathcal{B}$.}
The reader is referred to \DP{as} \cite{robert2004monte} for details on Monte Carlo integration methods.
\end{remark}

On the basis of \eqref{eqn:app}, the volume minimization of problem \eqref{eqn:problinhull} can be then approximated as
\begin{equation} \label{appr:problinhullappr}
\min_{\mathcal{S} \in \mathbf{S}}\sum_{i=1}^N I_{\{\mathcal{S}\}}(p_i) ~~~ \mbox{s.t. }  \mathcal{M} \subseteq 
\mathcal{S}
\end{equation}


In the following subsection, we describe a greedy procedure aiming at computing an approximation of the  minimization problem \eqref{appr:problinhullappr}.

\subsection{A greedy approach for solving \eqref{appr:problinhullappr}}
The key steps of the approach proposed in this section  to compute a polytopic outer-approximation $\mathcal{S}$ of the   set $\mathcal{M}$ are  summarized in Algorithm 2. 

\begin{algorithm} \emph{Algorithm 2: Polytopic outer approximation $\mathcal{S}$ of $\mathcal{M}$}  \\
 \noindent [\emph{input} \!] List $\mathcal{L}=\{p_i\}_{i=1}^N$ of $N$ random points uniformly distributed in the box $\mathcal{B}$.
\begin{description}
  \item[A2.1] Set $j=1$.
  \item[A2.2] Compute the half-space $\mathcal{H}_j$, defined as $\mathcal{H}_j:\boldsymbol{\omega}_j^\top \xhyp-\nu_j \leq 0$ (with $\boldsymbol{\omega}_j\neq 0$), that contains the minimum number of points in the list $\mathcal{L}$ and such that $\mathcal{M}$ is included in $\mathcal{H}_j$, i.e.,
  \begin{equation} \label{opt:algv4}
  \begin{split}
\boldsymbol{\omega}^*_j,\nu^*_j= & \mbox{arg} \min_{\substack{     \boldsymbol{\omega}_j \in \mathbb{R}^n\\ \nu_j \in \mathbb{R} }}
\sum_{i=1}^N I_{\{\mathcal{H}_j\}}(p_i) \\
   & \mbox{s.t. } \\
   & \boldsymbol{\omega}_j \neq 0 \\
   & \mathcal{M} \subseteq \mathcal{H}_j \\
   & p_i \in \mathcal{L}, \ \ \ i=1,\ldots,N
  \end{split}
  \end{equation}
\item[A2.3] Collect all the points $p_i \in \mathcal{L}$  belonging  to the half-space  $\mathcal{H}_j$ (computed through \eqref{opt:algv4}) in a list $\mathcal{L}_j$. Let $N_j$ be the number of elements of $\mathcal{L}_j$.
\item[A2.4] If $N_j < N$, then   $\mathcal{L} \leftarrow \mathcal{L}_j$, $N \leftarrow N_j$, $j \leftarrow j+1$ and go to step A2.2. Otherwise,  set $J=j-1$ and go to step  A2.5.
\item[A2.5] Define the polytope $\mathcal{S}$ as
$
\mathcal{S}=\mathcal{B} \cap \bigcap_{j=1}^J\mathcal{H}_j.
$
\end{description}
\noindent [\emph{output} \!] Polytope $\mathcal{S}$.
\end{algorithm}

Algorithm 2 generates a sequence of half-spaces $\mathcal{H}_1,\ldots, \mathcal{H}_J$ as follows. First, the half-space $\mathcal{H}_1$ that minimizes an approximation of the \DP{volume} of the polytope $\mathcal{B} \cap \mathcal{H}_1$ is computed. The approximation is due to the fact that the \DP{volume} of $\mathcal{B} \cap \mathcal{H}_1$, given by the integral $  \int_{\mathcal{B} \cap \mathcal{H}_1} d\mathbf{x} $, is approximated (up to the constant $\tfrac{Vol(\mathcal{B})}{N}$) by $ \sum_{i=1}^N I_{\{\mathcal{H}_1\}}(p_i)$ (corresponding to the objective function of problem \eqref{opt:algv4}). Then, the new half-space $\mathcal{H}_2$ that minimizes an approximation of the \DP{volume} of the polytope $\mathcal{B} \cap \mathcal{H}_1  \cap \mathcal{H}_2$ is generated. In order to approximate the \DP{volume} of $\mathcal{B} \cap \mathcal{H}_1  \cap \mathcal{H}_2$, all the points $p_i$ of the list $\mathcal{L}=\{p_i\}_{i=1}^N$ that do not belong to the polytope $\mathcal{B} \cap \mathcal{H}_1$ are discarded, and all and only the points belonging to $\mathcal{B} \cap \mathcal{H}_1$ are collected in a new list $\mathcal{L}_1=\{p_i\}_{i=1}^{N_1}$  (step A2.3).  The \DP{volume} of $\mathcal{B} \cap \mathcal{H}_1  \cap \mathcal{H}_2$  is then approximated by $  \sum_{i=1}^{N_1} I_{\{\mathcal{H}_2\}}(p_i)$, with $p_i \in \mathcal{L}_1$. The procedure is repeated until $N_{J+1}=N_{J}$ (step A2.4), which means that the number of samples $p_i$ belonging to the polytope $\mathcal{B} \cap \mathcal{H}_1 \cap \ldots \cap \mathcal{H}_{J+1}$ is equal to the number of samples $p_i$ belonging to the polytope $\mathcal{B} \cap \mathcal{H}_1 \cap \ldots \cap \mathcal{H}_{J}$. Note that, because of the constraint $\mathcal{M} \subseteq \mathcal{H}_j$ appearing in optimization problem \eqref{opt:alg}, the half-spaces $\mathcal{H}_1,\ldots, \mathcal{H}_J$ are guaranteed to contain the  set $\mathcal{M}$, and thus $ \mathcal{S}=\mathcal{B} \cap \bigcap_{j=1}^J\mathcal{H}_j$ is an outer approximation of $\mathcal{M}$.
Finally, we would like to remark that, in case we are interested also in bounding the maximum number of half-spaces 
defining the  polytopic outer approximation $\mathcal{S}^{}$, Algorithm 2 can be stopped after an a-priori specified 
number of iterations.

\begin{example}
 \label{ex:2}
 Let us consider again Example \ref{ex:1}.
The first steps of Algorithm 2  are visualized in Fig. \ref{fig:Alg1}.
An outer-bounding box $\mathcal{B}$ of the true state uncertainty set (dark gray region) is first computed (Fig. (a)).
A set of $80$ random points (black dots) uniformly distributed in  $\mathcal{B}$  is generated (Fig. (b)). The half-space $\mathcal{H}_1$ containing the true state uncertainty set and the minimum number of points is computed. The points which do not belong to $\mathcal{H}_1$ are discarded (gray dots in Fig. (c)).
A new half-space $\mathcal{H}_2$ containing the true state uncertainty set and the minimum number of black dots is computed (Fig. (d)). Again, the points that do not belong to $\mathcal{H}_1 \cap \mathcal{H}_2$ are discarded (gray dots in Fig. (d)). The procedure terminates when no more black points can be discarded.

\begin{figure}[h]
        \centering
         \begin{tabular}{cc}
                     \includegraphics[width=0.22\textwidth]{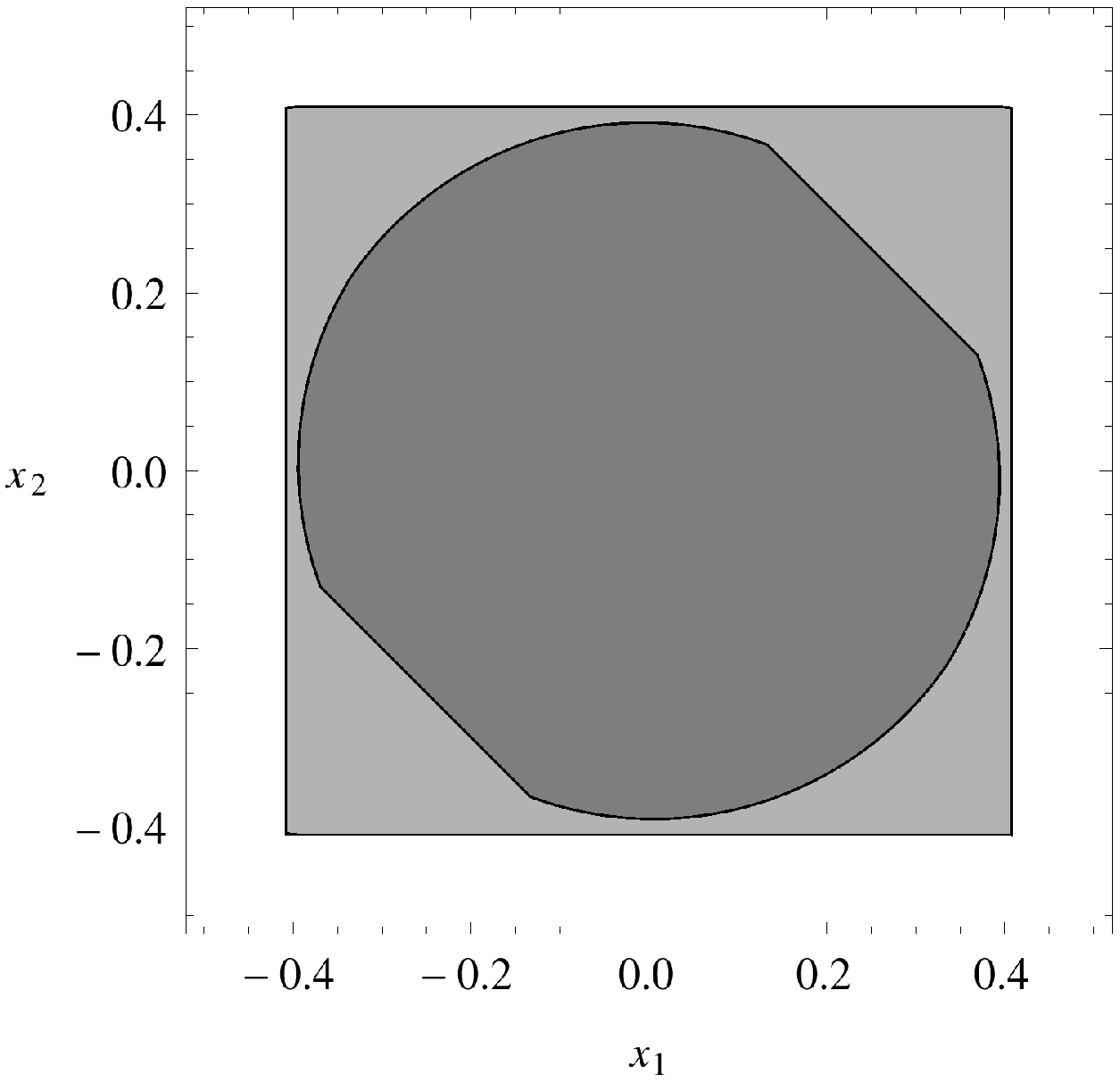} &  \includegraphics[width=0.22\textwidth]{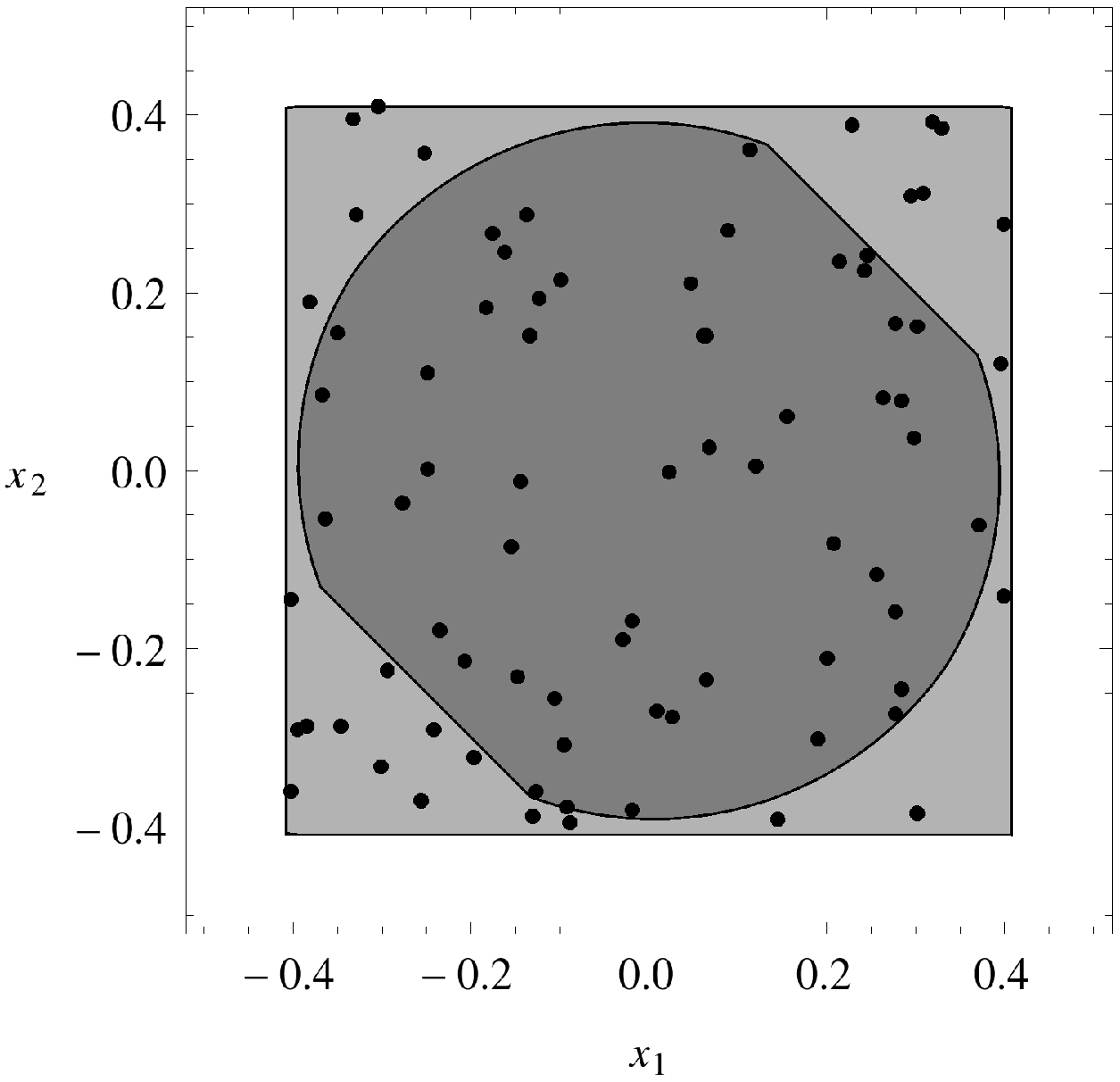} \vspace{-0.2cm}\\
                 \ \ \ \  \ \    (a)  &  \ \ \ (b) \vspace{0.2cm}\\
                      \includegraphics[width=0.22\textwidth]{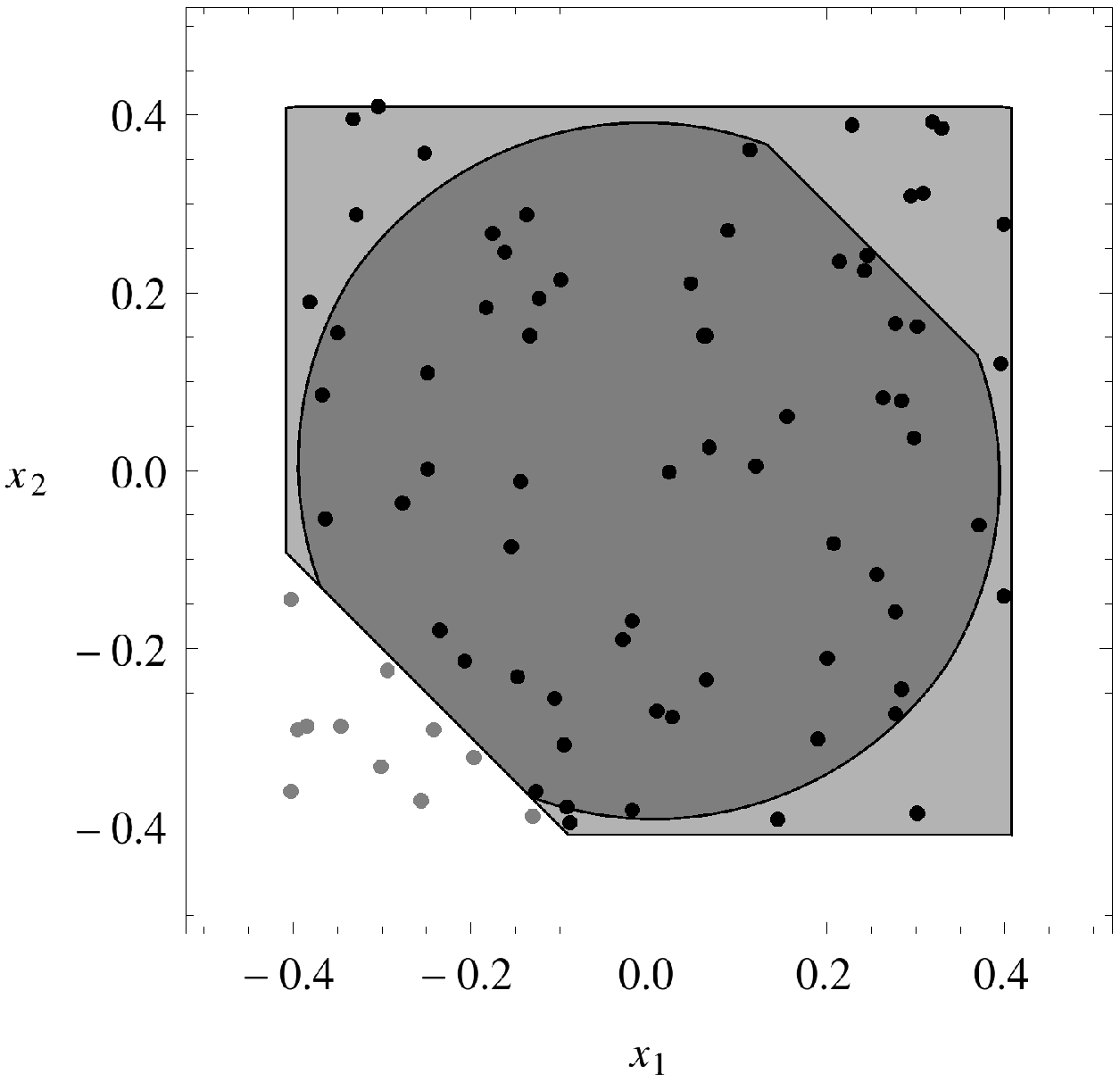} & \includegraphics[width=0.22\textwidth]{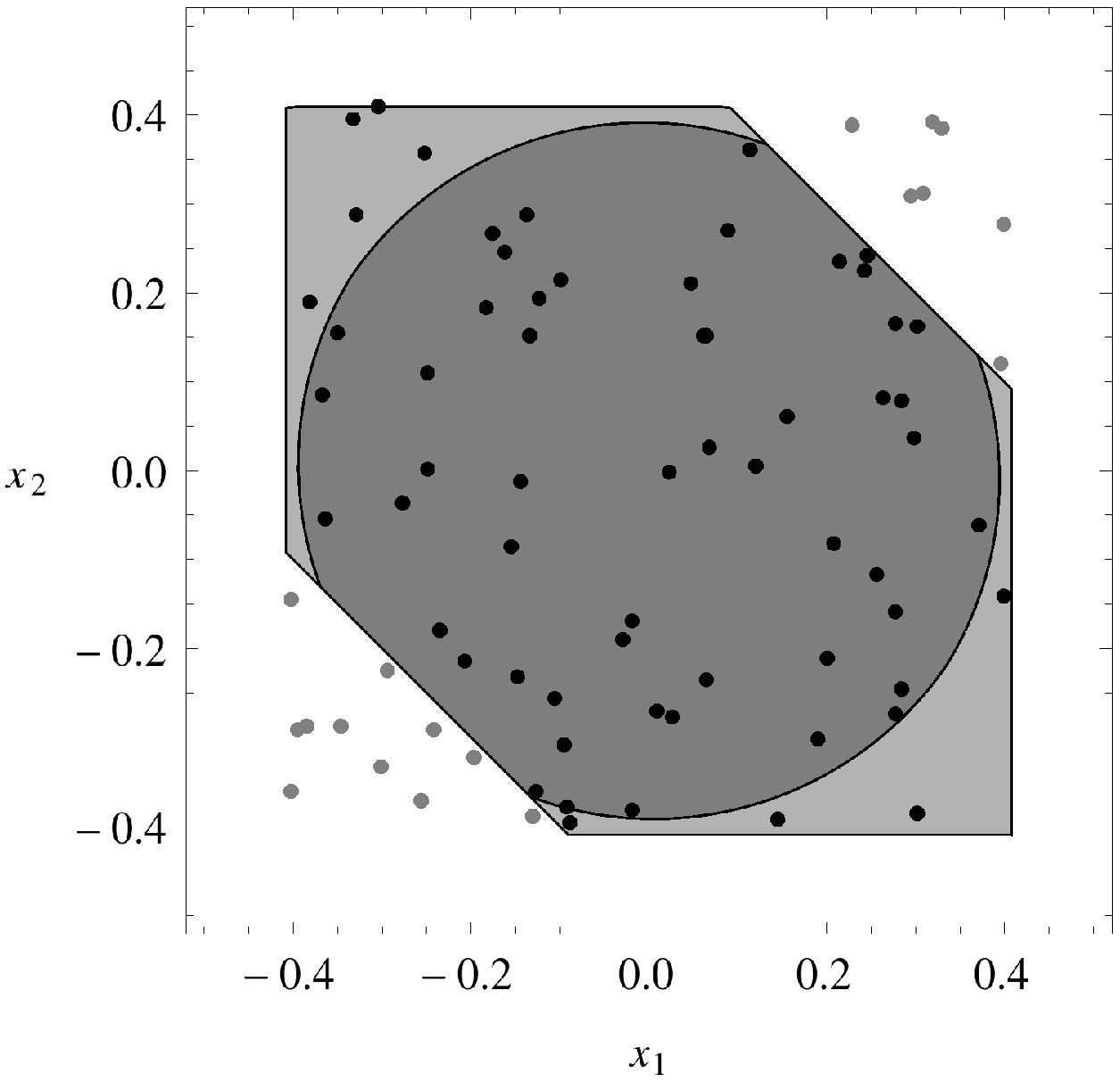} \vspace{-0.2cm}\\
                \ \ \ \    \ \    (c)  &  \ \ \ (d)
         \end{tabular}
        \caption{First steps of Algorithm 2.}\label{fig:Alg1}
\end{figure}

\end{example}

Technical details of step \DP{A2.2,} which is the core of Algorithm 2, are provided in the following sections. 

\subsection{Approximation of the indicator functions}
Note that the objective function of problem \eqref{opt:algv4} is noncontinuous and nonconvex since it is the sum of the indicator functions $I_{\{\mathcal{H}_j\}}(p_i)$ defined as
\begin{equation}
I_{\{\mathcal{H}_j\}}(p_i) = \left\{\begin{array}{lll}
                                    1 & \mbox{ if } \boldsymbol{\omega}_j^\top p_i-\nu_j \leq 0,\\
                                    0 & \mbox{ if } \boldsymbol{\omega}_j^\top p_i-\nu_j > 0.
                                  \end{array}
\right.
\end{equation}
We then transform it in a convex objective function. Each indicator function $I_{\{\mathcal{H}_j\}}(p_i)$ is here approximated by the convex function $R_{\{\mathcal{H}_j\}}(x_i)$ defined as
\begin{equation}
R_{\{\mathcal{H}_j\}}(p_i) = \left\{\begin{array}{lll}
                                    -\boldsymbol{\omega}_j^\top p_i+\nu_j^{} & \mbox{ if }\boldsymbol{\omega}_j^\top p_i-\nu_j^{} \leq 0,\\
                                    0 & \mbox{ if } \boldsymbol{\omega}_j^\top p_i-\nu_j^{} > 0.
                                  \end{array}
\right.
\end{equation}
A plot of the functions $I_{\{\mathcal{H}_j\}}(p_i)$ and $R_{\{\mathcal{H}_j\}}(p_i)$ is given in Fig. \ref{fig:indfuncapp1}.
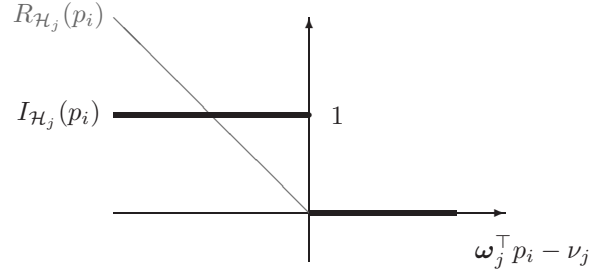
\begin{figure}[t!]
\begin{minipage}[h]{75mm}
\centering
\setlength{\unitlength}{0.65mm}
\begin{picture}(50,85)(0,20)
\newcounter{cms}
\setcounter{cms}{0}
\linethickness{0.075mm}
\put(0,60){\vector(1,0){80}}
\linethickness{0.075mm}
\put(40,50){\vector(0,1){50}}
\put(40,80){\circle*{1}}
\linethickness{0.3mm}
\put(40,60){\color{grayDP} \line(-1,1){40}}
\linethickness{0.6mm}
\put(0,80){\line(1,0){40}}
\put(40,60){\line(1,0){30}}
\put(45,80){\makebox(0,0){{ $1$}}}
\put(-12,80){\makebox(0,0){{ $I_{\mathcal{H}_j}(p_i)$}}}
\put(-12,100){\makebox(0,0){{ \color{grayDP} {$R_{\mathcal{H}_j}(p_i)$}}}}
\put(85,52){\makebox(0,0){{ $\boldsymbol{\omega}_j^\top p_i-\nu_j$}}}

\end{picture}
\end{minipage}
\vspace*{-20mm} \caption{Indicator function $I_{\mathcal{H}_j}(p_i)$ (black solid line) and approximate function $R_{\{\mathcal{H}_j\}}(p_i)$ (gray thin line). When  $\boldsymbol{\omega}_j^\top p_i-\nu_j > 0$, $I_{\{\mathcal{H}_j\}}(p_i)$ and $R_{\{\mathcal{H}_j\}}(p_i)$ are overlapped and they are equal to 0.} \label{fig:indfuncapp1}
\end{figure}
%

%

 Problem \eqref{opt:algv4} is thus relaxed \DP{by}  replacing the indicator functions $I_{\{\mathcal{H}_j\}}(p_i)$ with the convex functions $R_{\{\mathcal{H}_j\}}(p_i)$, i.e.,
  \begin{equation} \label{opt:algv4app}
  \begin{split}
\tilde{\boldsymbol{\omega}}^*_j,\tilde{\nu}^*_j= & \mbox{arg} \min_{\substack{     \boldsymbol{\omega}_j \in \mathbb{R}^n\\ \nu_j \in \mathbb{R} }}
\sum_{i=1}^N R_{\{\mathcal{H}_j\}}(p_i) \\
   & \mbox{s.t. } \\
   & \boldsymbol{\omega}_j \neq 0 \\
   & \mathcal{M} \subseteq \mathcal{H}_j \\
   & p_i \in \mathcal{L}, \ \ \ i=1,\ldots,N.
  \end{split}
  \end{equation}


\begin{theorem} \label{pr:supphype}
If (i) there exists at least one point $p_i$ in the list $\mathcal{L}$ such that $\tilde{\boldsymbol{\omega}}_j^{*^\top} p_i-\tilde{\nu}^*_j < 0$ and (ii) $\tilde{\boldsymbol{\omega}}^*_j,\tilde{\nu}^*_j$ is the optimal solution of problem \eqref{opt:algv4app}, then the hyperplane $\tilde{\boldsymbol{\omega}}_j^{*^\top} \xhyp-\tilde{\nu}^*_j  = 0$ is a supporting hyperplane for the set $\mathcal{M}$.
\end{theorem}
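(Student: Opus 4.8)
The plan is to verify the two properties that characterise a supporting hyperplane of the convex set $\mathcal{M}$: first, that $\mathcal{M}$ lies entirely in the closed half-space $(\tilde{\boldsymbol{\omega}}^*_j)^\top \xhyp \leq \tilde{\nu}^*_j$, and second, that the hyperplane actually makes contact with $\mathcal{M}$, i.e.\ $\sup_{\xhyp \in \mathcal{M}} (\tilde{\boldsymbol{\omega}}^*_j)^\top \xhyp = \tilde{\nu}^*_j$ with the supremum attained. The first property is immediate from hypothesis (ii): since $(\tilde{\boldsymbol{\omega}}^*_j,\tilde{\nu}^*_j)$ is feasible for \eqref{opt:algv4app}, the constraint $\mathcal{M}\subseteq \mathcal{H}_j$ yields $(\tilde{\boldsymbol{\omega}}^*_j)^\top \xhyp \leq \tilde{\nu}^*_j$ for every $\xhyp \in \mathcal{M}$. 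Moreover, since $\mathcal{X}_k$ is compact and $\mathcal{M}$ coincides with its convex hull by Theorem \ref{Th:means}, the set $\mathcal{M}$ is compact, so the supremum above is attained. The substance of the theorem therefore reduces to showing that this supremum equals $\tilde{\nu}^*_j$, which I would establish by contradiction using the optimality of $\tilde{\nu}^*_j$.

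The central observation concerns the objective of \eqref{opt:algv4app} viewed as a function of $\nu_j$ alone, with the normal vector frozen at $\tilde{\boldsymbol{\omega}}^*_j$. Setting $a_i = (\tilde{\boldsymbol{\omega}}^*_j)^\top p_i$, this objective is $f(\nu_j) = \sum_{i=1}^N R_{\{\mathcal{H}_j\}}(p_i) = \sum_{i=1}^N \max(\nu_j - a_i,\,0)$, which is convex, non-decreasing and piecewise linear, with right-derivative at $\nu_j$ equal to the number of samples satisfying $a_i < \nu_j$. Consequently $f$ is \emph{strictly} increasing on any interval over which at least one sample lies strictly inside the half-space. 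This is exactly the role of hypothesis (i): it supplies a point $p_{i_0} \in \mathcal{L}$ with $a_{i_0} < \tilde{\nu}^*_j$, so that $f$ is strictly increasing immediately to the left of $\tilde{\nu}^*_j$.

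Suppose now, for contradiction, that the hyperplane does not touch $\mathcal{M}$, i.e.\ $s := \sup_{\xhyp \in \mathcal{M}} (\tilde{\boldsymbol{\omega}}^*_j)^\top \xhyp < \tilde{\nu}^*_j$. Any $\nu_j' \in [s,\tilde{\nu}^*_j)$ keeps $(\tilde{\boldsymbol{\omega}}^*_j,\nu_j')$ feasible, because $\nu_j' \geq s$ still enforces $\mathcal{M}\subseteq\mathcal{H}_j$ while $\tilde{\boldsymbol{\omega}}^*_j \neq 0$ is unchanged. I would then choose such a $\nu_j'$ strictly above $a_{i_0}$: if $a_{i_0} < s$, take $\nu_j' = s$ (the point $p_{i_0}$ then stays strictly interior on all of $[s,\tilde{\nu}^*_j]$); otherwise $s \leq a_{i_0} < \tilde{\nu}^*_j$, and any $\nu_j' \in (a_{i_0},\tilde{\nu}^*_j)$ is feasible since $\nu_j' > a_{i_0} \geq s$. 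In both cases $p_{i_0}$ is strictly interior throughout $(\nu_j',\tilde{\nu}^*_j]$, so strict monotonicity of $f$ gives $f(\nu_j') < f(\tilde{\nu}^*_j)$, contradicting the optimality of $(\tilde{\boldsymbol{\omega}}^*_j,\tilde{\nu}^*_j)$. Hence $s = \tilde{\nu}^*_j$, and combined with the first property this proves the hyperplane is supporting. The main obstacle I anticipate is precisely the case split in this last step: without hypothesis (i) the function $f$ could be flat on $[s,\tilde{\nu}^*_j]$, so that no strictly interior sample forces a decrease, which makes the careful choice of $\nu_j'$ relative to $a_{i_0}$ the technical crux of the argument.
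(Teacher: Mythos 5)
Your proof is correct and follows essentially the same route as the paper's: both argue by contradiction, shifting $\nu$ downward while keeping $\mathcal{M}\subseteq\mathcal{H}_j$ feasible, and both use hypothesis (i) to force a strict decrease of $\sum_i R_{\{\mathcal{H}_j\}}(p_i)$, contradicting optimality. Your case split on the location of $a_{i_0}$ relative to $s$ is in fact unnecessary (the term-by-term comparison $\max(\nu_j'-a_{i_0},0)<\tilde{\nu}^*_j-a_{i_0}$ holds for any feasible $\nu_j'<\tilde{\nu}^*_j$, which is how the paper argues), but it does no harm.
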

\begin{proof}
Theorem \ref{pr:supphype} is proved by contradiction.  Let  $\tilde{\mathcal{H}}_j^*$ be the half-space defined   as  $\tilde{\mathcal{H}}_j^*: \tilde{\boldsymbol{\omega}}_j^{*^\top} \xhyp-\tilde{\nu}^*_j \leq 0$. Let us suppose that $\tilde{\boldsymbol{\omega}}^*_j,\tilde{\nu}^*_j$ is a feasible solution of problem \eqref{opt:algv4app} such that $\tilde{\boldsymbol{\omega}}_j^{*^\top} \xhyp-\tilde{\nu}^*_j = 0$ is not a supporting hyperplane for $\mathcal{M}$, that is, for some $\varepsilon > 0$,  $\tilde{\mathcal{H}}_j:\tilde{\boldsymbol{\omega}}_j^{*^\top} \xhyp-\tilde{\nu}^*_j+ \varepsilon \leq 0$ for all $\mathbf{x} \in \mathcal{M}$. Let us define $\tilde{\nu}_j$ as $\tilde{\nu}_j=\tilde{\nu}^*_j- \varepsilon$. Note that $\{\tilde{\boldsymbol{\omega}}^*_j,\tilde{\nu}_j\}$ is still a feasible solution of problem \eqref{opt:algv4app} and $\mathcal{\tilde{H}}_j \subseteq \tilde{\mathcal{H}}_j^*$. Let $ V^*=\sum_{i=1}^N R_{\{\tilde{\mathcal{H}}_j^*\}}(p_i)$ be the value of the cost function of Problem \eqref{opt:algv4app} obtained for $\boldsymbol{\omega}^{}=\tilde{\boldsymbol{\omega}}^*_j$ and $\nu^{}=\tilde{\nu}^*_j$.  $R_{\{\tilde{\mathcal{H}}_j^*\}}(p_i)$ is then  given by
\begin{equation}
R_{\{\tilde{\mathcal{H}}_j^*\}}(p_i) =\left\{\!\begin{array}{ll}
                                    -\tilde{\boldsymbol{\omega}}_j^{*^\top} p_i+\tilde{\nu}^*_j & \mbox{if } \tilde{\boldsymbol{\omega}}_j^{*^\top} p_i-\tilde{\nu}^*_j \leq 0\\
                                    0 & \mbox{if } \tilde{\boldsymbol{\omega}}_j^{*^\top} p_i-\tilde{\nu}^*_j > 0
                                    \end{array}
\right.
\end{equation}
Similarly, let $ \tilde{V}=\sum_{i=1}^N R_{\{\mathcal{\tilde{H}}_j\}}(p_i)$ be the value of the cost function of Problem \eqref{opt:algv4app} obtained when $\boldsymbol{\omega}^{}=\tilde{\boldsymbol{\omega}}^*_j$ and $\nu^{}=\tilde{\nu}_j$. The term $R_{\{\mathcal{\tilde{H}}_j\}}(p_i)$ is the given by
\begin{equation}
R_{\{\mathcal{\tilde{H}}_j\}}(p_i)=\left\{\!\begin{array}{ll}
-\tilde{\boldsymbol{\omega}}_j^{*^\top} p_i+\tilde{\nu}_j & \mbox{if } \tilde{\boldsymbol{\omega}}_j^{*^\top} p_i-\tilde{\nu}_j \leq 0\\
                                    0 & \mbox{if } \tilde{\boldsymbol{\omega}}_j^{*^\top} p_i-\tilde{\nu}_j > 0
                                    \end{array}
\right.
\end{equation}
Since $\mathcal{\tilde{H}}_j \subseteq \mathcal{\tilde{H}}_j^*$, then when $R_{\{\tilde{\mathcal{H}}_j^*\}}(p_i) = 0$, also $R_{\{\mathcal{\tilde{H}}_j\}}(p_i)$ is equal to zero. On the other hand, when $R_{\{\tilde{\mathcal{H}}_j^*\}}(p_i)=-\tilde{\boldsymbol{\omega}}_j^{*^\top} p_i+\tilde{\nu}^*_j>0$, then $R_{\{\mathcal{\tilde{H}}_j\}}(p_i)$ can be equal either to zero or to $-\tilde{\boldsymbol{\omega}}_j^{*^\top} p_i+\tilde{\nu}_j=-\tilde{\boldsymbol{\omega}}_j^{*^\top} p_i+\tilde{\nu}^*_j-\varepsilon \leq -\tilde{\boldsymbol{\omega}}_j^{*^\top} p_i+\tilde{\nu}^*_j$. On the basis of the above considerations, it follows:
 \begin{equation}
 \left\{
 \begin{array}{lllllll}
   R_{\{\tilde{\mathcal{H}}_j^*\}}(p_i) &=&  R_{\{\mathcal{\tilde{H}}_j\}}(p_i) & \mbox{ if } \tilde{\boldsymbol{\omega}}_j^{*^\top} p_i-\tilde{\nu}^*_j \geq 0\\
   R_{\{\tilde{\mathcal{H}}_j^*\}}(p_i) &>&  R_{\{\mathcal{\tilde{H}}_j\}}(p_i) & \mbox{ if } \tilde{\boldsymbol{\omega}}_j^{*^\top} p_i-\tilde{\nu}^*_j < 0
 \end{array}
 \right.
 \end{equation}
 Since by hypothesis (i) there exists at least one point $p_i$ in the list $\mathcal{L}$ such that $\tilde{\boldsymbol{\omega}}_j^{*^\top} p_i-\tilde{\nu}^*_j < 0$, it follows that $V^* > \tilde{V}$. Therefore, $\tilde{\boldsymbol{\omega}}_j^{*},\tilde{\nu}^*_j$ is not the optimal solution of problem \eqref{opt:algv4app}. This contradicts hypothesis (ii). \hfill $\blacksquare$
\end{proof}

   Theorem \ref{pr:supphype} has the following interpretation. Among all the half-spaces defined by the normal vector $\tilde{\boldsymbol{\omega}}_j^{*}$ and containing  the set $\mathcal{M}$,  the optimization problem \eqref{opt:algv4app} provides the half-space  $\mathcal{H}_j^*: \tilde{\boldsymbol{\omega}}_j^{*^\top} \xhyp-\tilde{\nu}^*_j\leq 0$ which minimizes the volume of the polytope $\mathcal{B} \cap \mathcal{H}_1^* \cap \ldots \cap \mathcal{H}^*_{j}$, even if the integral  $\int_{\mathcal{B} \cap \mathcal{H}^*_1 \cap \ldots \mathcal{H}^*_{j}}d\mathbf{x}$ is approximated (up to a constant)  by $ \sum_{i=1}^N I_{\{\mathcal{H}^*_j\}}(p_i)$ and the indicator functions $I_{\{\mathcal{H}^*_j\}}(p_i)$ are replaced by the convex functions $R_{\{\mathcal{H}^*_j\}}(p_i)$.\\

\subsection{Handling the constraint $\mathcal{M} \subseteq \mathcal{H}_j$} \label{Sec:HandConst}
The constraints $\mathcal{M} \subseteq \mathcal{H}_j$ can be handled through the SOS-based approach already discussed in Section \ref{subsec:SOS}. Specifically, by introducing a SOS relaxation, Problem  \eqref{opt:algv4app} is replaced by:
  \begin{equation} \label{opt:alg}
  \begin{split}
&\boldsymbol{\omega}^*_j,\nu^*_j=  \mbox{arg}\min_{\substack{     \boldsymbol{\omega}_j \in \mathbb{R}^n \\
                                            \nu_j \in \mathbb{R} \\
                                            \mathbf{Q}_s }} \sum_{i=1}^N R_{\{\mathcal{H}_j\}}(p_i) \\
   & \mbox{s.t. } \\
   &  \boldsymbol{\omega}_j \neq 0 \\
    & \nu_j - \boldsymbol{\omega}_j \mathbf{x}=    \mathbf{q}_\relorder(\xaug)^\top \mathbf{Q}_0 \mathbf{q}_\relorder(\xaug)+\\
   -& \sum_{s=1}^m \mathbf{q}_\relorder(\xaug)^\top \mathbf{Q}_s \mathbf{q}_\relorder(\xaug) h_s(\xaug), ~~\forall~\xaug \in \mathbb{R}^{2n}\\
  & \mathbf{Q}_s \succeq 0, \ \ s=0,\ldots,m. \\
   & p_i \in \mathcal{L}, \ \ \ i=1,\ldots,N
  \end{split}
  \end{equation}
Note that, as already discussed in Section \ref{subsec:SOS}, the constraint $\nu_j - \boldsymbol{\omega}_j^\top \mathbf{x} \geq 0$ is satisfied for all $\mathbf{x} \in \mathcal{X}$. Therefore, the half-space: $\mathcal{H}_j=\left\{ \xhyp \in \mathbb{R}^n:   \boldsymbol{\omega}_j^\top \xhyp  \leq \nu_j \right\}$ is guaranteed to contain $\mathcal{X}$. Thus, also the set $\mathcal{M}$ is included in $\mathcal{H}_j$.
Finally, note that, in order to deal with the nonconvex constraint $\boldsymbol{\omega}_j \neq 0$ in \eqref{opt:alg}, 
Problem \eqref{opt:alg} can be splitted into the two following SDP problems:
 \begin{subequations} \label{opt:algappSDPsplit}
 \begin{equation}
   \begin{split}
&\overline{\boldsymbol{\omega}}^*_j,\overline{\nu}^*_j=  \mbox{arg}\min_{\substack{     \boldsymbol{\omega}_j \in \mathbb{R}^n \\
                                            \nu_j \in \mathbb{R} \\
                                            \mathbf{Q}_s   }} \sum_{i=1}^N R_{\{\mathcal{H}_j\}}(p_i) \\
   & \mbox{s.t. } \\
   &  \boldsymbol{\omega}_{j,1} =1 \\
    & \nu_j - \boldsymbol{\omega}_j \mathbf{x}=    \mathbf{q}_\relorder(\xaug)^\top \mathbf{Q}_0 \mathbf{q}_\relorder(\xaug)+\\
   -& \sum_{s=1}^m \mathbf{q}_\relorder(\xaug)^\top \mathbf{Q}_s \mathbf{q}_\relorder(\xaug) h_s(\xaug), ~~\forall~\xaug \in \mathbb{R}^{2n}\\
  & \mathbf{Q}_s \succeq 0, \ \ s=0,\ldots,m. \\
   & p_i \in \mathcal{L}, \ \ \ i=1,\ldots,N
  \end{split}
 \end{equation}
  \begin{equation}
   \begin{split}
&\underline{\boldsymbol{\omega}}^*_j,\underline{\nu}^*_j=  \mbox{arg}\min_{\substack{     \boldsymbol{\omega}_j \in \mathbb{R}^n \\
                                            \nu_j \in \mathbb{R} \\
                                            \mathbf{Q}_s  }} \sum_{i=1}^N R_{\{\mathcal{H}_j\}}(p_i) \\
   & \mbox{s.t. } \\
   &  \boldsymbol{\omega}_{j,1} =-1 \\
    & \nu_j - \boldsymbol{\omega}_j \mathbf{x}=    \mathbf{q}_\relorder(\xaug)^\top \mathbf{Q}_0 \mathbf{q}_\relorder(\xaug)+\\
   -& \sum_{s=1}^m \mathbf{q}_\relorder(\xaug)^\top \mathbf{Q}_s \mathbf{q}_\relorder(\xaug) h_s(\xaug), ~~\forall~\xaug \in \mathbb{R}^{2n}\\
  & \mathbf{Q}_s \succeq 0, \ \ s=0,\ldots,m. \\
   & p_i \in \mathcal{L}, \ \ \ i=1,\ldots,N
  \end{split}
 \end{equation}
 \end{subequations}
 with $\boldsymbol{\omega}_{j,1}$ denoting the first component of vector $\boldsymbol{\omega}_j$. The optimizer $\{\boldsymbol{\omega}^*_j,\nu^*_j\}$ of Problem  \eqref{opt:alg} is the given by the pair  $\{\overline{\boldsymbol{\omega}}^*_j,\overline{\nu}^*_j\}$ or  $\{\underline{\boldsymbol{\omega}}^*_j,\underline{\nu}^*_j\}$ that provides the minimum value of the objective function $\sum_{i=1}^N R_{\{\mathcal{H}_j\}}(p_i)$.

\begin{remark}
 \DP{For a fixed degree $2\relorder$ of the SOS polynomials, the number of optimization variables of Problems \eqref{opt:algappSDPsplit} increases  polynomially with the state dimension $n$ and linearly with the number $m$ of constraints $h_s(\xaug)$ defining the set $\mathcal{X}$. Specifically, the number of optimization variables of Problem \eqref{opt:algappSDPsplit} is $O(mn^{2\relorder})$. In fact, the number of free decision variables in the matrices $\mathbf{Q}_s$ (with $s=0,\ldots,m$) is $\dfrac{{2n+\relorder \choose \relorder}\left(1+{2n+\relorder \choose \relorder}\right)}{2}=O(n^{2\relorder})$. On the other hand, for a fixed $n$, the size of the matrices $\mathbf{Q}_s$ increases exponentially with the degree $2\relorder$ of the SOS polynomials. In order not to obtain too conservative results, practical experience of the authors  suggests to take   $\relorder\geq \lceil \frac{d}{2} \rceil+1$, where $\lceil \cdot \rceil$ denotes the ceiling operator. We  remind that $d$ is the degree of the considered polynomial system in \eqref{eq:setmembersys}. Roughly speaking, because of memory requirement issues, the relaxed SDP problems \eqref{opt:algappSDPsplit} can be solved in commercial workstations and with general purpose SDP solvers like \emph{SeDuMi} in case of polynomial systems with $4$ state variables and of degree   $d$  not greater than $6$. Systems with more state variables can be considered in case of smaller values of $d$. Similarly, systems of higher degree can be considered  in case of a smaller number of state variables.}
 \end{remark}

 \begin{remark}
\DP{ As already discussed, Algorithm 2 computes, at each iteration, an half-space $\mathcal{H}_j: \boldsymbol{\omega}_j^{^\top} \xaug-\nu_j\leq 0$ containing the set $\mathcal{X}$ (thus also $\mathcal{M}$), i.e.,
 \begin{equation} \label{eqn:robCOnst}
 \boldsymbol{\omega}_j^{^\top} \xaug-\nu_j\leq 0 \ \ \ \ \forall\ \xaug \in \mathcal{X}.
 \end{equation}
 The parameters $ \boldsymbol{\omega}_j$ and $\nu_j$ are then computed by solving Problem \eqref{opt:alg}, and replacing the robust constraint \eqref{eqn:robCOnst} with a SOS constraint (see Problem \eqref{opt:alg}). Note that  the same principles of Algorithm 2 and of the SOS-based relaxation  discussed in  this section can be used to compute, instead of  an half-space $\mathcal{H}_j$, a more complex semialgebraic set (e.g., an ellipsoid) described by the polynomial inequality:
  \begin{equation} \label{eqn:robCOnst2}
 \boldsymbol{\omega}{^\top} \textbf{q}(\xaug)\leq 0 \ \ \ \ \forall\ \xaug \in \mathcal{X},
 \end{equation}
 with $\textbf{q}(\xaug)$ being a vector of monomials in the variable $\xaug$. The parameters  $\boldsymbol{\omega}$ can be then computed by properly modifying the SOS-relaxed  Problem \eqref{opt:alg}. For instance, in case we are interested in computing an ellipsoidal outer approximation of  $\mathcal{X}$, the  function  $\boldsymbol{\omega}{^\top} \textbf{q}(\xaug)$ should have a quadratic form, and its Hessian should be enforced to be positive definite.}
  \end{remark}




\begin{example}
 \label{ex:3}
 Let us continue with Example \ref{ex:1}.  Fig. \ref{fig:Alg2}
 shows the polytope obtained by applying Algorithm 2 solving Problems \eqref{opt:algappSDPsplit} instead of the
 nonconvex optimization in \DP{A2.2.}  The SDP   Problems \eqref{opt:algappSDPsplit} are solved for a degree of the SOS polynomials equal to $2\relorder=4$. The solution is a polytope $\mathcal{S}$ that outer-bounds $\mathcal{X}_1$.
 It can be observed that because of the approximations introduced (SOS and the approximation of the indicator functions), which are necessary to efficiently solve the optimizations, the half-spaces bounding $\mathcal{X}_1$ are not tangent to it and the computed region $\mathcal{S}$ still include
 two black points. Therefore, the computed polytope is not the minimum-volume polytope. However, it is already a very good outer-approximation of it. In the next section, we describe a further refinement of Algorithm 2 aiming
 to computing a tighter polytope $\mathcal{S}$.
  According to the steps A1.1.3 and A1.1.4 of Algorithm 1, we outer-approximate $\mathcal{M}$ (and so $\mathcal{X}_1$) with $\mathcal{S}$.
 At the next time step  ($k=2$) of the set-membership filter, we repeat the procedure to compute a new polytope
 outer-bounding  $\mathcal{X}_2$. The difference is now that instead of $h_1(\cdot)$ in \eqref{eq:defhs},
 we have the $9$ linear inequalities that define the polytope in  Fig. \ref{fig:Alg2}.
 This procedure is repeated recursively in time.
\begin{figure}[h]
        \centering
                     \includegraphics[width=0.3\textwidth]{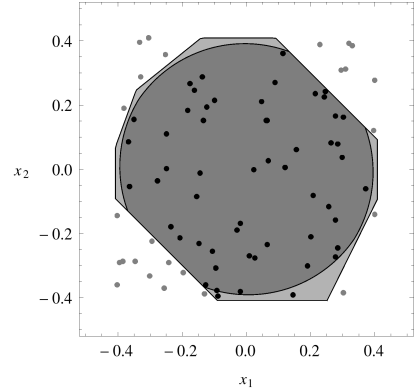}
          \caption{Final polytope after running Algorithm 2.}\label{fig:Alg2}
\end{figure}

\end{example}

\subsection{Refinement of the polytope $\mathcal{S}$}

Summarizing, an approximate solution of the robust optimization problem \eqref{opt:algv4} is   computed by solving the convex SDP  problems \eqref{opt:algappSDPsplit}, and, on the basis of Algorithm 2, the polytopic-outer approximation $\mathcal{S}$ of the set $\mathcal{M}$ is then defined as $\mathcal{S}= \mathcal{B} \cap \mathcal{H}_1^{} \cap \ldots \cap \mathcal{H}_{J}^{}$. \\
Note that, in solving \eqref{opt:algappSDPsplit} instead of \eqref{opt:algv4}, two different sources of approximation are introduced:
\begin{itemize}
  \item Approximation of the indicator functions $I_{\{\mathcal{H}_j\}}(p_i)$ with the convex functions $R_{\{\mathcal{H}_j\}}(p_i)$ (see Fig. \ref{fig:indfuncapp1});
  \item Approximation of the robust constraint $\nu -\boldsymbol{\omega}^{{\top}}\mathbf{x}  \geq 0 \ \ \forall \mathbf{x} \in \mathcal{X}$ with the convex conservative constraint $\nu -\boldsymbol{\omega}^{{\top}}\mathbf{x} =\sigma_0(\xaug)-\sum_{s=1}^m \sigma_s(\xaug) h_s(\xaug)$.
\end{itemize}
The latter source of approximation can be reduced by increasing the degree $2\relorder$ of the SOS polynomials. In fact, as already discussed in Section \ref{subsec:SOS}, according to the  \emph{Putinar's Positivstellensatz} each function $\nu -\boldsymbol{\omega}^{{\top}}\mathbf{x}$ such that $\nu -\boldsymbol{\omega}^{{\top}}\mathbf{x} \geq 0 \ \forall \mathbf{x} \in \mathcal{X}$ can be written as $ \nu -\boldsymbol{\omega}^{{\top}}\mathbf{x}=\sigma_0(\xaug)-\sum_{s=1}^m \sigma_s(\xaug) h_s(\xaug)$ provided that the degree of the SOS polynomials $\sigma_0, \sigma_1, \ldots, \sigma_m$ is large enough.
On the other hand, there is no theoretical result concerning the accuracy of the approximation of the indicator functions in Problem \eqref{opt:algv4} with the convex functions $R_{\{\mathcal{H}_j\}}(p_i)$  appearing in Problem \eqref{opt:algappSDPsplit}.
Because of that, the polytope  $\mathcal{S}$ obtained by solving convex problems \eqref{opt:algappSDPsplit} (for $j=1,\ldots,J$) is not guaranteed to minimize the original nonconvex optimization problem \eqref{appr:problinhullappr}.  Algorithm 3 can then be used to   refine the polytopic outer approximation $\mathcal{S}$ provided by Algorithm 2.

\begin{algorithm} \emph{Algorithm 3: Refinement of the polytope $\mathcal{S}$} \label{alg2}

 \noindent [\emph{input}] Sequence of the random points $p_i$ provided as input of Algorithm 2 and such that $p_i \in \mathcal{S}$. Let $\tilde{N}$ be the number of points $p_i$ belonging to $\mathcal{S}$.
\begin{description}
\item [A3.1 ] \;  $\mathcal{S}^{*} \leftarrow \mathcal{S}$
  \item[A3.2 ] \; \; \; \; \; for $\ i=1:\tilde{N}$
  \begin{description}
  \item[A3.2.1  ] \; \; \;  \; \; Compute the solution of the following optimization problem
                    \begin{equation} \label{opt:alg22}
  \begin{split}
  \boldsymbol{\omega}^*_i,\nu^*_i= & \mbox{arg}\min_{\begin{array}{c}
                                            \omega \in \mathbb{R}^n \\
                                            \nu \in \mathbb{R}
                                          \end{array}} -\boldsymbol{\omega}^{{\top}}p_i+\nu^{} \\
   & \mbox{s.t. } \\
   & \boldsymbol{\omega} \neq 0 \\
   & \nu- \boldsymbol{\omega}^{\top}   \mathbf{x} \geq 0 \ \ \forall \mathbf{x} \in \mathcal{X}.\\
  \end{split}
  \end{equation}
   \item[A3.2.2] \; \; \; \; \; $\mathcal{S}^{*} \leftarrow \mathcal{S}^{*} \cap \mathcal{H}_i$.
\end{description}
\end{description}
\noindent  [output] Polytope $\mathcal{S}^{*}$.
\end{algorithm}

\DP{The main principle of Algorithm 3 is to process, one by one, all the points belonging to the  polytopic outer-approximation $\mathcal{S}$ initially given by Algorithm 2. For each of such points $p_i$,  an half-space $ \mathcal{H}_i: \boldsymbol{\omega}_i^{*^\top}\tilde{\mathbf{x}}-\nu_i^* \leq 0$ including the set $\mathcal{X}$ (i.e., $\mathcal{X} \subseteq \mathcal{H}_i$) and at the same not containing the point $p_i$ (i.e., $p_i \not\in \mathcal{H}_i$, or equivalently  $-\boldsymbol{\omega}_i^{*^\top}p_i+\nu_i^* < 0$) is seeked. In this way, all the points $p_i$ which do not belong to the minimum volume polytopic outer approximation of $\mathcal{X}$ are discarded. Thus, a tighter (but more complex) polytopic outer approximation of $\mathcal{X}$ is obtained.}

An important feature enjoyed by the refined polytope  $\mathcal{S}^{*}$ is given by the following theorem.\\

\begin{theorem} \label{the:ref}
The polytope $\mathcal{S}^{*}$ computed with  Algorithm 3 is a global minimizer of problem \eqref{appr:problinhullappr}.
\end{theorem}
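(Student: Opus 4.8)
The plan is to prove the statement by sandwiching: I would show that every polytope feasible for \eqref{appr:problinhullappr} must contain all the sample points $p_i$ that lie in $\mathcal{M}$, and that the refined polytope $\mathcal{S}^{*}$ returned by Algorithm 3 contains these and \emph{only} these points, so that it attains the obvious lower bound on the objective $\sum_{i=1}^N I_{\{\mathcal{S}\}}(p_i)$ and is therefore a global minimizer.

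First I would characterize, for a single point $p_i$, whether the separation problem \eqref{opt:alg22} admits a half-space that excludes it. For fixed $\boldsymbol{\omega}$ the binding choice of the decision variable is $\nu=\sup_{\mathbf{x}\in\mathcal{X}}\boldsymbol{\omega}^{\top}\mathbf{x}$, and since $\mathcal{M}=\mathrm{conv}(\mathcal{X})$ (Theorem \ref{Th:means}) its support function agrees with that of $\mathcal{X}$, as already exploited in Theorem \ref{Th:fstar}; hence the objective reduces to $\sup_{\mathbf{x}\in\mathcal{M}}\boldsymbol{\omega}^{\top}\mathbf{x}-\boldsymbol{\omega}^{\top}p_i$. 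There exists a feasible pair $(\boldsymbol{\omega},\nu)$ with $-\boldsymbol{\omega}^{\top}p_i+\nu<0$ if and only if some $\boldsymbol{\omega}$ makes this quantity negative, i.e. $\boldsymbol{\omega}^{\top}p_i>\sup_{\mathbf{x}\in\mathcal{M}}\boldsymbol{\omega}^{\top}\mathbf{x}$. Because $\mathcal{M}$ is convex and compact (being the convex hull of the compact set $\mathcal{X}$), the separating hyperplane theorem gives the dichotomy: such a $\boldsymbol{\omega}$ exists precisely when $p_i\notin\mathcal{M}$, and no such $\boldsymbol{\omega}$ exists when $p_i\in\mathcal{M}$. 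In the former case the half-space $\mathcal{H}_i$ returned by \eqref{opt:alg22} satisfies $p_i\notin\mathcal{H}_i$ while $\mathcal{X}\subseteq\mathcal{H}_i$, so the intersection in step A3.2.2 removes $p_i$; in the latter case $p_i\in\mathcal{H}_i$ and the point is retained.

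Next I would combine this with the sweep of step A3.2 over the sample points contained in the initial polytope $\mathcal{S}$. Since Algorithm 2 guarantees $\mathcal{M}\subseteq\mathcal{S}$, every sample point lying in $\mathcal{M}$ is already in $\mathcal{S}$ and is therefore examined. By the dichotomy above, after processing all such points the refined set $\mathcal{S}^{*}=\mathcal{S}\cap\bigcap_i\mathcal{H}_i$ excludes each $p_i\notin\mathcal{M}$ and retains each $p_i\in\mathcal{M}$, so that $\sum_{i=1}^N I_{\{\mathcal{S}^{*}\}}(p_i)=\#\{i:p_i\in\mathcal{M}\}$. Moreover every $\mathcal{H}_i$ contains $\mathcal{X}$ and, being convex, therefore contains $\mathcal{M}$, so $\mathcal{M}\subseteq\mathcal{S}^{*}$; as a finite intersection of half-spaces, $\mathcal{S}^{*}$ is a feasible polytope for \eqref{appr:problinhullappr}.

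Finally I would close with the matching lower bound: any polytope $\mathcal{S}$ feasible for \eqref{appr:problinhullappr} satisfies $\mathcal{M}\subseteq\mathcal{S}$, hence contains every sample point in $\mathcal{M}$, giving $\sum_{i=1}^N I_{\{\mathcal{S}\}}(p_i)\ge\#\{i:p_i\in\mathcal{M}\}$; as $\mathcal{S}^{*}$ is feasible and meets this bound with equality, it is a global minimizer. The main obstacle is the first step, namely establishing the separation dichotomy rigorously — in particular justifying that $\nu$ may be taken equal to the support value over $\mathcal{X}$ (so that it is the \emph{exact} robust constraint of \eqref{opt:alg22}, and not its SOS relaxation, that Algorithm 3 enforces) and that no point of $\mathcal{M}$ can ever be cut off. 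The degenerate case of sample points landing on $\partial\mathcal{M}$ (a probability-zero event for points drawn uniformly in $\mathcal{B}$) must also be noted to be harmless, since such points are retained and likewise belong to every feasible $\mathcal{S}$, so they do not affect either the count for $\mathcal{S}^{*}$ or the lower bound.
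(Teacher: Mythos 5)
Your proof is correct, and it takes a somewhat different route from the paper's. The paper argues by contradiction: it supposes a feasible polytope $\tilde{\mathcal{S}}$ is not a minimizer, extracts a strictly smaller feasible polytope $\tilde{\tilde{\mathcal{S}}}$ and a sample point $\bar{p}\in\tilde{\mathcal{S}}\setminus\tilde{\tilde{\mathcal{S}}}$, and asserts that Problem \eqref{opt:alg22} applied to $\bar p$ then returns a half-space excluding $\bar p$, so $\bar p\notin\mathcal{S}^{*}$ and the output cannot be a non-minimizer. You instead prove the result directly by exhibiting a matching lower bound: using the separating hyperplane theorem and the identification $\mathcal{M}=\mathrm{conv}(\mathcal{X})$ from Theorem \ref{Th:means}, you characterize exactly which sample points can be cut (those outside $\mathcal{M}$), show $\mathcal{S}^{*}$ retains precisely the points in $\mathcal{M}$, and observe that every feasible polytope must retain at least those. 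The underlying mechanism is the same --- the optimal value of \eqref{opt:alg22} is negative whenever a separating half-space exists --- but your version buys two things the paper leaves implicit: an explicit verification that $\mathcal{S}^{*}$ is itself feasible, and the exact optimal value $\#\{i:p_i\in\mathcal{M}\}$ of \eqref{appr:problinhullappr}; the paper's contradiction only rules out that the output coincides with a given non-minimizing set, which is a slightly weaker closing step. Your caveats are also apt and mirror the paper's own remark after the theorem: the statement concerns the exact robust constraint in \eqref{opt:alg22}, not its SOS relaxation, and boundary sample points are harmless. One small technicality both you and the paper gloss over is that the $\arg\min$ in \eqref{opt:alg22} need not be attained without normalizing $\boldsymbol{\omega}$ (the objective is positively homogeneous), but any feasible pair with negative objective value suffices for the argument, so this does not affect the conclusion.
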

\begin{proof}
Let  $\tilde{\mathcal{S}}$  be a polytope  belonging to the set of feasibility of problem \eqref{appr:problinhullappr} (i.e., $\mathcal{M} \subseteq \tilde{\mathcal{S}}$) which does not minimize  \eqref{appr:problinhullappr}. This means that there exists a polytope $\tilde{\tilde{\mathcal{S}}}$ such that $\mathcal{M} \subseteq \tilde{\tilde{\mathcal{S}}} \subseteq  \tilde{\mathcal{S}} $ and a point $\bar{p}$ given as input of Algorithm 2 such that:  $\bar{p} \in \tilde{\mathcal{S}}$ and $\bar{p} \not\in \tilde{\tilde{\mathcal{S}}}$. Thus, for $p_i=\bar{p}$, the optimal solution $\{\boldsymbol{\omega}^*_i,\nu^*_i\}$ of Problem \eqref{opt:alg22} is such that $\boldsymbol{\omega}_i^{*^\top}p_i-\nu_i^* > 0$. Let $\mathcal{H}_i$ be the half-space defined as $\mathcal{H}_i: \boldsymbol{\omega}_i^{*^\top} \mathbf{x}-\nu_i^* \leq 0$. Obviously, $\bar{p} \not\in \mathcal{H}_i$. Besides,  the output $\mathcal{S}^{*}$ of Algorithm 3  is contained in the hyperspace $\mathcal{H}_i$. Therefore, since $\bar{p} \not\in \mathcal{H}_i$ and $\mathcal{S}^{*} \subseteq\mathcal{H}_i$, it follows that  the point $\bar{p} \not\in \mathcal{S}^{*}$. Then, a polytope $\tilde{\mathcal{S}}$ that does not minimize the optimization problem \eqref{appr:problinhullappr}  can not be the output of Algorithm 3. \hfill $\blacksquare$
\end{proof}

\DP{Theorem \ref{the:ref} mainly says that there exists no polytope including $\mathcal{M}$ and containing less randomly generated  points $p_i$ than  $\mathcal{S}^{*}$.   However, it is worth remarking that only an approximated solution  of Problem \eqref{opt:alg22} can be computed,  as the robust constraint $\nu- \boldsymbol{\omega}^{\top}   \mathbf{x} \geq 0  \ \ \forall \mathbf{x} \in \mathcal{X}$ appearing in \eqref{opt:alg22} has to be handled with the SOS-based  techniques described in the previous section. Thus, conservativeness could be added at this step. Therefore, the main interpretation to be given to Theorem \ref{the:ref}  is that    Algorithm 3 cancels the effect of approximating the indicator function $I_{\{\mathcal{H}_j\}}(p_i)$ with the convex function $R_{\{\mathcal{H}_j\}}(p_i)$.}

\begin{example}
 \label{ex:4}
Let us continue with Example \ref{ex:1}. Fig. \ref{fig:ex1TreuSUS} shows the computed polytope $\mathcal{S}^*_1$, along with the true state uncertainty set $\mathcal{X}_1$. 
The   CPU taken by the proposed algorithm to compute the $54$ hyper-spaces that define the polytope  $\mathcal{S}^*_1$  is about $830$ seconds. However, only $80$ out of $830$ seconds are spent by the  solver \emph{SeDuMi} to solve $108$ (i.e., $54 \times 2$) SDP problems of the type \eqref{opt:algappSDPsplit}. The other $750$ seconds are required by the \emph{SOStools} interface to formulate, $108$ times, the SDP problems \eqref{opt:algappSDPsplit} in the format used by  SeDuMi. Therefore, the computational time required to compute the polytope $\mathcal{S}^*_1$ can be drastically reduced not only by using more efficient SDP solvers, but also directly formulating the SDP problems \eqref{opt:algappSDPsplit} in the format required by the used SDP solver.


\begin{figure}[!t]
\centerline{
\includegraphics[scale=0.5]{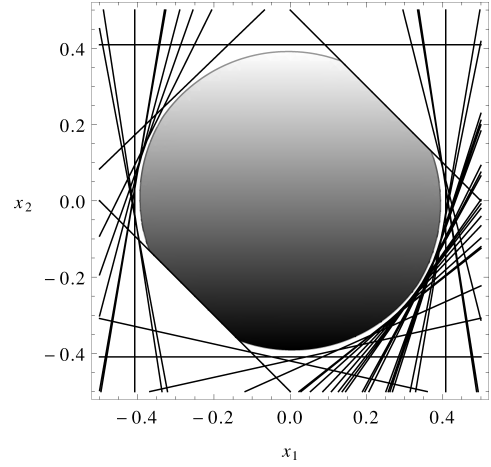}
} \caption{Exampe 1: hyperplanes defining the polytope $\mathcal{S}^*_1$ (black lines) and true state uncertainty set $\mathcal{X}_1$ (gray region).}
\label{fig:ex1TreuSUS}
\end{figure}

 \end{example}

\section{Numerical examples} \label{Sec:Exmple}
Let us consider the discrete-time Lotka Volterra prey-predator model \cite{2013Raj} described by the difference equations:
{  \begin{equation}
\label{eq:Volterra}
\begin{array}[pos]{rcl}
 x_1\!(k)&\!\!=\!\!& x_1(\!k\!-\!1\!)\!\left(r\!+\!1\!-\!rx_1(k\!-\!1)\!-\!bx_2(k\!-\!1)\!\right)\!+\!w_1(\!k\!-\!1\!),\\
   x_2\!(k)&\!\!=\!\!& cx_1(k\!-\!1)x_2(k\!-\!1)+(1-d)x_2(k\!-\!1)\!+\!w_2(\!k\!-\!1\!),
\end{array}
\end{equation}}
where $x_1(k)$ and $x_2(k)$ denote the prey and the predator population size, respectively. In the example, the following values of the parameters are considered: $r=0.25$, $b=0.95$, $c=1.1$ and  $d=0.55$. The observed output is the sum of the population of the prey and predator densities, i.e.,
\begin{equation}
\mathbf{y}(k)=x_1(k)+x_2(k)+\mathbf{v}(k),
\end{equation}
where the measurement noise $\mathbf{v}(k)$ is bounded and such that $\|\mathbf{v}(k)\|_{\infty} \leq 0.05$. The initial prey and predator sizes $\mathbf{x}(0)=\left[ x_1(0) \ \ x_2(0)\right]^{\top}$ are known to belong to the box $\displaystyle \mathcal{X}_0=\left[ 0.28 \ \ 0.32\right]\times\left[ 0.78 \ \ 0.82\right]$ and the noise process $\mathbf{w}(k)=[w_1(k) \ \ w_2(k)]^\top$ is bounded by $\|\mathbf{w}(k)\|_{\infty} \leq 0.001$. The data are obtained by simulating the model with initial conditions $x_1(0)=0.8$ and $x_2(0)=0.3$, and by corrupting the output observations with a random noise $\mathbf{v}(k)$ uniformly distributed within the interval $[-0.05 \ \ 0.05]$.

Polytopic outer approximations $\mathcal{S}^*_k$ of the state uncertainty sets $\mathcal{X}_k$ (with $k=1,\ldots,40$) are computed through Algorithm 2. $N=20$ random points  are used to approximate the volume of  the polytope $\mathcal{S}^*_k$ (as described in Section \ref{susec:area}).  In order to limit the complexity in the description of the polytopes $\mathcal{S}^*_k$, the maximum number of halfspaces describing $\mathcal{S}^*_k$ is set to $8$. This means that Algorithm 2 is stopped after at most $4$ iterations (we remind that the initial outer-bounding box $\mathcal{B}_k$ is already described by $4$ half-spaces). When the output of Algorithm 2 is a polytope $\mathcal{S}^*_k$ described by less than $8$ half-spaces, Algorithms 3 is  used to refine the polytopic outer approximation  $\mathcal{S}^*_k$.
  Fig. \ref{fig:setP} shows the computed polytopes   $\mathcal{S}^*_k$ outer approximating  the state uncertainty sets $\mathcal{X}_k$ (with $k=1,\ldots,40$), along with the true state trajectory.  The \emph{Hybrid toolbox} \cite{HybTBX} has been used to plot the polytopes in Fig. \ref{fig:setP}. The average CPU time required  to compute a polytope  $\mathcal{S}^*_k$ is $28$ seconds (not including the time required by the \emph{SOStools} interface to formulate the SDP problems \eqref{opt:algappSDPsplit} in the format used by the solver \emph{SeDuMi}). For the sake of comparison, Fig. \ref{fig:setB} shows the outer-bounding approximations of the state uncertainty sets $\mathcal{X}_k$ when boxes, instead of polytopes, are propagated over time. For a better comparison, in Fig. \ref{fig:setstate} the bounds on the time-trajectory of each state variable obtained by propagating boxes and polytopes are plotted. The obtained results show that, as expected, propagating polytopic uncertainty sets instead of boxes provides a more accurate state estimation. Finally, we would like to remark that a small uncertainty on the noise process is assumed (i.e., $\|\mathbf{w}(k)\|_{\infty} \leq 0.001$) since, for larger bounds on $\|\mathbf{w}(k)\|_{\infty}$, it would not be possible to   clearly visualize the uncertainty boxes in Fig. \ref{fig:setB}.

\begin{figure}[!t]
\centerline{
\includegraphics[scale=1]{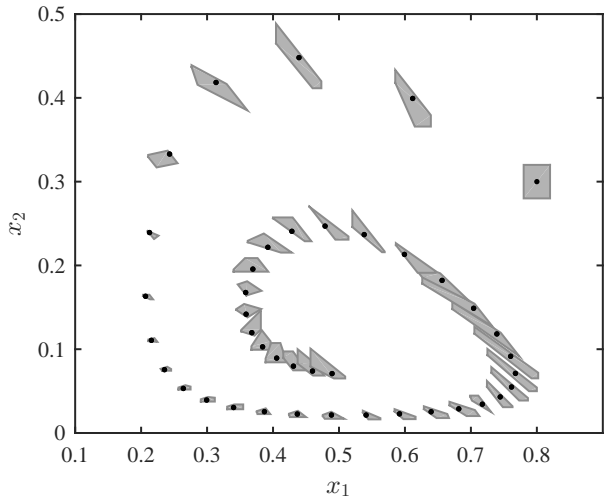}
} \caption{Example 2: outer-bounding polytopes (gray) and true state trajectory (black dots).} \vspace{2mm}
\label{fig:setP}
\end{figure}

\begin{figure}[!t]
\centerline{
\includegraphics[scale=1]{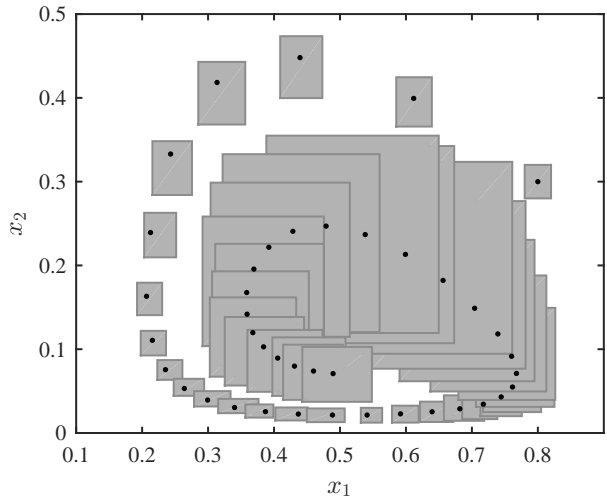}
} \caption{Example 2:  outer-bounding boxes (gray) and true state trajectory (black dots).} \vspace{2mm}
\label{fig:setB}
\end{figure}

\begin{figure}[!t]
\centerline{
\includegraphics[scale=1]{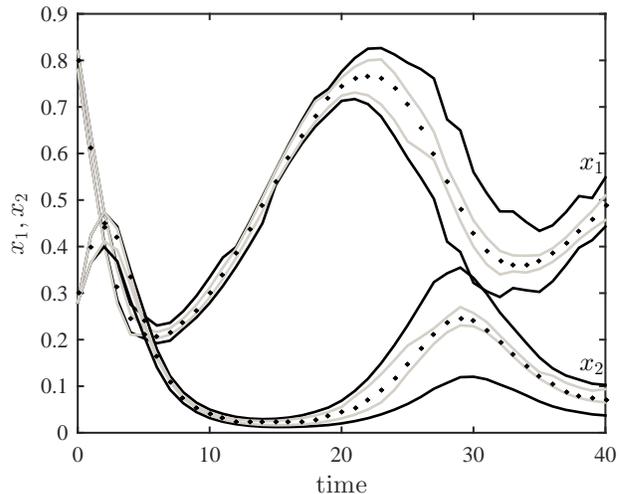}
} \caption{Example 2: bounds on state trajectories obtained by propagating boxes (black line); bounds on state trajectories obtained by propagating polytopes (gray line); true state trajectory (black dots).}
\label{fig:setstate}
\end{figure}

\section{Conclusions}
In this paper we have shown that set-membership estimation can  be equivalently formulated in a probabilistic setting by employing sets of probability measures.
Inferences in set-membership estimation  are thus carried out by computing expectations \DP{with respect to}  the updated
set of probability measures $\mathcal{P}$, as in the probabilistic case, and they can be formulated as a semi-infinite linear programming problem.
We have further shown that, if the nonlinearities in the measurement and process equations are polynomial and if the bounding sets for initial state, process and measurement noises are described by polynomial inequalities, then an approximation of this semi-infinite linear programming problem can be obtained by using the theory of sum-of-squares polynomial optimization. We have finally derived a procedure to compute a polytopic outer-approximation of the true membership-set, by computing the  minimum-volume polytope
that outer-bounds the set that includes all the means computed \DP{with respect to}  $\mathcal{P}$.
\DP{It is worth remarking that the set-membership filtering approach discussed in the paper can be extended  to handle 
noise-corrupted input signal observations and uncertainty in the model parameters, provided that the corresponding state 
uncertainty set $\mathcal{X}_k$ remains a semi-algebraic set.} As future works, we aim first to speed up the proposed 
state estimation algorithm in order to be able to use it in real-time applications in systems with fast dynamics.
    \DP{To this aim, dedicated numerical algorithms, written in  Fortran and C++, for solving the formulated SDP optimization problems will be developed. Furthermore,}  the SDP problems will be directly formulated in the format required by the SDP solver, thus avoiding the use of interfaces like \emph{SOStools}. An open source toolbox will be then released.
Second, by exploiting the  probabilistic interpretation of set-membership estimation, we plan to reformulate it using the theory of moments developed by Lasserre. This will allow us to ground totally  set-membership estimation in the realm of the probabilistic setting, which will give us
the possibility of   combining the two approaches in order to obtain hybrid filters, i.e., filters that include both classical probabilistic uncertainties and
set-membership uncertainties.

\bibliographystyle{ieeetr}        
\bibliography{FIlteringSMbib}           

 \end{document}